\newcommand{\kk}{\mathbb{k}}
\newcommand{\KK}{\mathbb{K}}
\newcommand{\NN}{\normalfont\mathbb{N}}
\newcommand{\xx}{{\normalfont\mathbf{x}}}
\newcommand{\yy}{\normalfont\mathbf{y}}
\newcommand{\mm}{{\normalfont\mathfrak{m}}}
\newcommand{\pp}{{\normalfont\mathfrak{p}}}
\newcommand{\nn}{{\normalfont\mathfrak{n}}}
\newcommand{\Ext}{\normalfont\text{Ext}}
\newcommand{\Ker}{\normalfont\text{Ker}}
\newcommand{\Quot}{\normalfont\text{Quot}}
\newcommand{\HT}{\normalfont\text{ht}}
\newcommand{\Ass}{\normalfont\text{Ass}}
\newcommand{\Hom}{\normalfont\text{Hom}}
\newcommand{\leng}{{\normalfont\text{length}}}
\newcommand{\BB}{\mathbb{B}}
\newcommand{\EE}{\mathbb{E}}
\newcommand{\VV}{\mathbb{V}}
\newcommand{\WW}{\mathbb{W}}
\newcommand{\LL}{\mathbb{L}}
\newcommand{\FF}{\mathbb{F}}
\newcommand{\HH}{\normalfont\text{H}}
\newcommand{\gr}{\normalfont\text{gr}}
\newcommand{\Sol}{\normalfont\text{Sol}}
\newcommand{\AAA}{\mathfrak{A}}
\newcommand{\Spec}{{\normalfont\text{Spec}}}
\newcommand{\Diff}{{\normalfont\text{Diff}}}
\newcommand{\verbatimfont}[1]{\renewcommand{\verbatim@font}{\ttfamily#1}}
\newcommand{\m}{\mathfrak{m}}
\newcommand{\amult}{{\normalfont \text{amult}}}
\newcommand{\DiffR}{\Diff_{R/\kk}}
\newcommand{\Diag}{\Delta_{R/\kk}}
\newcommand{\Princ}{P_{R/\kk}}
\DeclareMathOperator{\ann}{ann}
\DeclareMathOperator{\hull}{hull}
\newtheorem{theorem}{Theorem}[section]
\newaliascnt{headcor}{headthm}
\newaliascnt{headconj}{headthm}
\newaliascnt{corollary}{theorem}
\newtheorem{corollary}[corollary]{Corollary}
\newaliascnt{lemma}{theorem}
\newtheorem{lemma}[lemma]{Lemma}
\newaliascnt{conjecture}{theorem}
\newaliascnt{proposition}{theorem}
\newtheorem{proposition}[proposition]{Proposition}
\theoremstyle{definition}
\newaliascnt{definition}{theorem}
\newtheorem{definition}[definition]{Definition}
\newaliascnt{notation}{theorem}
\newtheorem{notation}[notation]{Notation}
\newaliascnt{example}{theorem}
\newtheorem{example}[example]{Example}
\newaliascnt{examples}{theorem}
\newaliascnt{remark}{theorem}
\newtheorem{remark}[remark]{Remark}
\newaliascnt{problem}{theorem}
\newaliascnt{construction}{theorem}
\newaliascnt{setup}{theorem}
\newtheorem{setup}[setup]{Setup}
\newaliascnt{algorithm}{theorem}
\newtheorem{algorithm}[algorithm]{Algorithm}
\newaliascnt{observation}{theorem}
\newaliascnt{defprop}{theorem}
\def\equationautorefname~#1\null{(#1)\null}
\def\sectionautorefname~#1\null{Section #1\null}
\def\subsectionautorefname~#1\null{\S #1\null}
\begin{document}

\title[Primary decomposition of modules: a computational differential approach]{Primary decomposition of modules: \\ a computational differential approach}

\author{Justin Chen}
\address{School of Mathematics, Georgia Institute of Technology,
	Atlanta, Georgia}
\email{justin.chen@math.gatech.edu}

\author{Yairon Cid-Ruiz}
\address{Department of Mathematics: Algebra and Geometry, Ghent University, Krijgslaan 281 – S25, 9000 Gent, Belgium}
\email{Yairon.CidRuiz@UGent.be}

\keywords{primary decomposition, differential primary decomposition, primary submodule,  differential operators, Noetherian operators, punctual Quot scheme, Weyl algebra, join of ideals.}
\subjclass[2010]{13N10, 13N99, 13E05, 14C05.}

\begin{abstract}
We study primary submodules and primary decompositions from a differential and computational point of view.
Our main theoretical contribution is  a general structure theory and a representation theorem for primary submodules of an arbitrary finitely generated module over a polynomial ring.
We characterize primary submodules in terms of differential operators and punctual Quot schemes.
Moreover, we introduce and implement an algorithm that computes a minimal differential primary decomposition for a module. 	
\end{abstract}

\maketitle

\section{Introduction}

The existence of primary decompositions has long been known, since the classical works of Lasker \cite{lasker1905theorie} and Noether \cite{noether1921idealtheorie}: over a Noetherian commutative ring, every proper submodule of a finitely generated module can be expressed as a finite intersection of primary submodules.
Accordingly, one can view primary submodules as the basic building blocks for arbitrary modules.
In this paper, we study the central notions of primary submodules and primary decompositions from a differential and computational point of view.

Let $\kk$ be a field of characteristic zero and $R$ a polynomial ring $R = \kk[x_1,\ldots,x_n]$.
The main objective of this paper is to characterize primary $R$-submodules with the use of \emph{differential operators} and \emph{punctual Quot schemes}.
We achieve this goal in \autoref{thm_main} (see also \autoref{cor_main}), which can be seen as an extension from ideals to $R$-modules of the representation theorem given in \cite{PRIM_IDEALS_DIFF_EQS}.
Consequently, we introduce an algorithm that computes a minimal \emph{differential primary decomposition} for arbitrary finitely generated modules (see \autoref{subsect_diff_prim_dec} for the precise definitions). 
This algorithm, along with others (see \autoref{sect_diff_algos}), have been implemented in the computer algebra system \texttt{Macaulay2} \cite{MACAULAY2}.

\medskip

The program of characterizing ideal membership in a polynomial ring with differential conditions was initiated by Gr\"obner \cite{Groebner} in the 1930s, and he successfully employed Macaulay's theory of inverse systems to characterize membership in an ideal primary to a rational maximal ideal. 
Nevertheless, a complete differential characterization of primary submodules over a polynomial ring was obtained in 1970 by analysts, in the form of the \emph{Fundamental Principle} of Ehrenpreis \cite{Ehrenpreis} and Palamodov \cite{PALAMODOV}.
Subsequent algebraic approaches were given in \cite{BRUMFIEL_DIFF_PRIM} and \cite{OBERST_NOETH_OPS}.
More recently, the study of primary ideals and primary submodules via differential operators has continued in e.g. \cite{DAMIANO}, \cite{NOETH_OPS}, \cite{PRIM_IDEALS_DIFF_EQS}, \cite{CHKL}, \cite{CCHKL} and \cite{DIFF_PRIM_DEC}.

Let $D_n$ denote the \emph{Weyl algebra} $D_n = \DiffR(R, R) = R \langle \partial_{x_1},\ldots, \partial_{x_n} \rangle$.
Let $\pp \in \Spec(R)$ be a prime ideal, and $U \subseteq R^r$ a $\pp$-primary $R$-submodule of a free $R$-module of rank $r$.
Following Palamodov's terminology, we say that $\delta_1,\ldots,\delta_m \in (D_n)^r \cong \DiffR(R^r,R)$ is a set of \emph{Noetherian operators} representing $U$ if we have the equality 
$$
U = \big\lbrace w \in R^r \mid \delta_i(w) \in \pp \text{ for all } 1 \le i \le m \big\rbrace.
$$
In a similar fashion to \cite{PRIM_IDEALS_DIFF_EQS}, we parametrize primary submodules via a number of different sets of objects, one of which yields a set of Noetherian operators (see \autoref{thm_main}). 
We provide an algorithm that computes a set of Noetherian operators for a submodule in \autoref{algo_noeth_ops}.
In the other direction, we give an algorithm that computes the submodule corresponding to a set of Noetherian operators in \autoref{algo_backwards}.
The following example displays some of the gadgets used in \autoref{thm_main}.

\begin{example}
	Let $R = \mathbb{Q}[x_1,x_2,x_3,x_4]$ and $\pp = \left(x_1-x_3, x_2-x_4\right) \in \Spec(R)$.
	The $R$-submodule
	$$
	U = \text{image}_R
	\begin{small}
		\begin{bmatrix}
		{x}_{1}-{x}_{3}&0&{x}_{2}-{x}_{4}&0\\
		-{x}_{2}+{x}_{4}&{x}_{1}-{x}_{3}&{x}_{2}{x}_{3}-{x}_{3}{x}_{4}&{x}_{2}^{2}-2\,{x}_{2}{x}_{4}+{x}_{4}^{2}
	\end{bmatrix}
	\end{small}
	\subseteq R^2
	$$
	is $\pp$-primary of multiplicity $3$ over $\pp$.
	Let $\FF = R_\pp/\pp R_\pp = \mathbb{Q}(\overline{x_1}, \overline{x_2})$ be the residue field of $\pp$, where $\overline{x_i} \in \FF$ denotes the class of $x_i \in R$.
	Under the bijective correspondence \hyperref[part_a]{(a)} $\leftrightarrow$ \hyperref[part_b]{(b)} of \autoref{thm_main}, we obtain a $\FF[[y_1,y_2]]$-submodule corresponding to $U$, namely
	$$
	\VV = \text{image}_{\FF[[y_1,y_2]]}
	\begin{small}
	\begin{bmatrix}
		y_1   & 0     & y_2\\
		-y_2 & y_1 & \overline{x_1}y_2
	\end{bmatrix}
	\end{small}
	\subseteq \FF[[y_1,y_2]]^2.
	$$
	Since $\dim_\FF\big(\FF[[y_1,y_2]]^2/\VV\big) = 3$, the submodule $\VV$ corresponds to a point in the punctual Quot scheme $\Quot^3\big(\FF[[y_1,y_2]]^2\big)$.
	Employing the correspondences \hyperref[part_b]{(b)} $\leftrightarrow$ \hyperref[part_c]{(c)} and \hyperref[part_c]{(c)} $\leftrightarrow$ \hyperref[part_d]{(d)} of \autoref{thm_main}, we get the following set of Noetherian operators $\delta_1 = \begin{small} 
		\begin{bmatrix} 
			1  \\
			0 
		\end{bmatrix}
	\end{small}, \;
	\delta_2 = \begin{small} 
		\begin{bmatrix} 
			0 \\
			1 
		\end{bmatrix}
	\end{small}, \;
	\delta_3 = \begin{small} 
		\begin{bmatrix} 
			\partial_{x_1}-x_1\partial_{x_2}  \\
			 \partial_{x_2}  
		\end{bmatrix} 
	\end{small} \in (D_4)^2
	$
	for $U$. In other words, the following equality holds
	$$
	U =  \left\lbrace (w_1,w_2) \in R^2 \, \mid \,  w_1 \in \pp, \; w_2 \in \pp  \,\text{ and }\, \frac{\partial w_1}{\partial_{x_1}} - x_1 \frac{\partial w_1}{\partial_{x_2}} + \frac{\partial w_2}{\partial_{x_2}} \in \pp \right\rbrace.
	$$
	Therefore, instead of describing $U$ via its generators, one could do so with the Noetherian operators $\delta_1,\delta_2,\delta_3$, or with the point in $\Quot^3\big(\FF[[y_1,y_2]]^2\big)$ given by $\VV$.
\end{example}

Recently, the notion of a \emph{differential primary decomposition} for a module was introduced in \cite{DIFF_PRIM_DEC}.
This notion is a natural generalization of Noetherian operators for (not necessarily primary) modules.
Let $U \subseteq R^r$ be an $R$-submodule with associated primes $\Ass_R(R^r/U) = \{ \pp_1,\ldots, \pp_k \} \subseteq \Spec(R)$.
We now wish to describe $U$ in the following way 
\begin{equation}
	\label{eq_dumb}
	U =  \big\lbrace w \in R^r \mid \delta(w) \in \pp_i \text{ for  all }  \delta \in \AAA_i
	\text{ and } 1 \le i \le k \big\rbrace, 
\end{equation}
where each $\AAA_i \subseteq (D_n)^r$ is a finite set of differential operators (for more details, see \autoref{subsect_diff_prim_dec}).
In \cite{DIFF_PRIM_DEC}, it was shown that there exists a differential primary decomposition for $U$ of size equal to the \emph{arithmetic multiplicity} of $U$ (see \autoref{def:diffPrimDec}, \ref{def:amult}) and that this is the minimal possible size.
We note that the existence of differential primary decompositions can always be achieved from the existence of Noetherian operators for a primary submodule. 
Indeed, if $U  \subset R^r$ is an $R$-submodule with primary decomposition $U = \bigcap_{i=1}^k U_i$, after choosing a set of Noetherian operators $\AAA_i$ for $U_i$ (which can be obtained from \autoref{thm_main}), one obtains a differential primary decomposition for $U$, and in particular,  equation \autoref{eq_dumb} holds.
Therefore, the challenge is to compute a \emph{minimal} differential primary decomposition, which yields a measure of complexity from a differential point of view.
Typically a minimal differential primary decomposition does not directly give an ordinary primary decomposition (see \autoref{examp_dumb} and the preceding discussion); however, see \autoref{rem_dumb}.

Building on \autoref{thm_main} and results from \cite{DIFF_PRIM_DEC}, we introduce an algorithm for computing minimal differential primary decompositions (see \autoref{algo_diff_prim_dec}).
This algorithm is an extension of \cite[Algorithm 5.4]{DIFF_PRIM_DEC} from ideals to modules.
The following example shows that a minimal differential primary decomposition need not be obtained by concatenating sets of Noetherian operators for each primary component.

\begin{example}
	Let $R = \mathbb{Q}[x_1,x_2,x_3]$ and consider the $R$-submodule $U = \text{image}_R 
	\begin{small}
	\begin{bmatrix}
		{x}_{1}^{2}&{x}_{1}{x}_{2}&{x}_{2}^{2}\\
		{x}_{2}^{2}&{x}_{2}{x}_{3}&{x}_{3}^{2}
	\end{bmatrix}
	\end{small}
	\subseteq R^2
	$.
	Following the algorithm of \autoref{sect_symb_prim_dec}, we can compute a primary decomposition $U = U_1 \cap U_2 \cap U_3$, where 
	\begin{align*}
	U_1 = \text{image}_R 
	\begin{small}
		\begin{bmatrix}
			{x}_{1}&{x}_{2}^{2}&0\\
			{x}_{3}&{x}_{3}^{2}&{x}_{2}^{2}-{x}_{1}{x}_{3}
		\end{bmatrix}
	\end{small},& \quad 
	U_2 = \text{image}_R 
	\begin{small}
	\begin{bmatrix}
		{x}_{3}&{x}_{2}^{2}&0&{x}_{1}{x}_{2}&{x}_{1}^{2}\\
		0&0&{x}_{3}^{2}&{x}_{2}{x}_{3}&{x}_{2}^{2}
	\end{bmatrix},
	\end{small}\\
	U_3 = \text{image}_R &
	\begin{small}
		\begin{bmatrix}
			0&{x}_{1}^{2}&{x}_{2}^{2}&{x}_{1}{x}_{2}&0\\
			{x}_{1}&0&{x}_{3}^{2}&{x}_{2}{x}_{3}&{x}_{2}^{2}
		\end{bmatrix}
	\end{small}.
	\end{align*}
	The $R$-submodules $U_i$ are primary with associated primes
	$\pp_1 = \left(x_2^2-x_1x_3\right)$,
	$\pp_2 = \left(x_2,x_3\right)$,
	$\pp_3 = \left(x_1,x_2\right)$, respectively.
	The multiplicities of $\{ U_1, U_2, U_3 \}$ over $\{ \pp_1, \pp_2, \pp_3 \}$ are $\{ 1, 5, 5 \}$.
	By \autoref{thm_main}, we can describe $U_1$, $U_2$, $U_3$ by sets of Noetherian operators of sizes $1$, $5$, $5$, respectively. 
	These sets of Noetherian operators give a differential primary decomposition for $U$ of size $11$.
	However, this naive computation is not optimal, as $\amult(U) = 3$.
	Indeed, a minimal differential primary decomposition for $U$ is given by
	$$
	U = \left\lbrace (w_1,w_2) \in R^2 \, \mid \,  -x_3w_1 + x_1w_2 \in \pp_1, \frac{\partial w_2}{\partial_{x_3}} \in \pp_2, \frac{\partial w_1}{\partial_{x_1}} \in \pp_3  \right\rbrace,
	$$
	with only one differential operator needed per associated prime.
\end{example}


\medskip

The basic outline of this paper is as follows.
In \autoref{sect_symb_prim_dec}, we review classical primary decomposition, and present a general algorithm for modules.
In \autoref{section_main_thm}, we prove our representation theorem (\autoref{thm_main}) for primary submodules of a free module, as well as an extension to arbitrary finitely generated modules (\autoref{cor_main}).
In \autoref{sect_diff_algos}, we present several algorithms of a differential nature, including one for computing minimal differential primary decompositions.
In \autoref{sect_joins}, we present an intrinsic differential description of certain ideals that come from the join construction.
Finally, in \autoref{sect_computations}, we illustrate the various algorithms on examples with our \texttt{Macaulay2} implementation.

\section{A general primary decomposition algorithm} \label{sect_symb_prim_dec}

We begin with a general algorithm for primary decomposition of modules, inspired by the work of Eisenbud-Huneke-Vasconcelos \cite{EHV} for ideals, with a particular focus on computational aspects.
Although primary decomposition for ideals has been well-studied in the literature, the case of modules is considerably less prominent (which we hope to remedy with this exposition!); cf. \cite{rutman1992grobner} as well as \cite{Indrees, idrees2014primary, idrees2015algorithm} for some treatments.

We start with some reductions to the main case of interest.
First, we reduce to the case of polynomial rings: let $\kk$ be an arbitrary field, and $T$ a finitely generated $\kk$-algebra.
If $R = \kk[x_1, \ldots, x_n]$ is a polynomial ring with a surjection $\pi : R \twoheadrightarrow T$,
then for any $T$-module $N$, one has $\Ass_R(\null_R N) = \{ \pi^{-1}(P) \mid P \in \Ass_T(N) \}$, where $\null_R N$ denotes $N$ viewed as an $R$-module.
In this way we may compute associated primes and primary components of $N$ over $T$, by first computing that of $\null_R N$ over $R$, and then applying $\pi$.

Next, to simplify the notation on modules, note that if $M' \subseteq M$ is a submodule, then a primary decomposition of $M'$ in $M$ can be obtained by lifting a primary decomposition of $0$ in $M/M'$.
Thus we may always take $M' = 0$ (a benefit afforded by working with general modules), and state the problem as follows: for a finitely generated module $M \ne 0$ over a polynomial ring $R$, find submodules $Q_1, \ldots, Q_s \subseteq M$ such that $\bigcap_{i=1}^s Q_i = 0$ and $|\Ass_R(M/Q_i)| = 1$.
The decomposition should moreover be \emph{minimal}, in the sense that $\bigcap_{j \ne i} Q_j \ne 0$ for all $i$, and also $\Ass_R(M/Q_i) = \Ass_R(M/Q_j) \iff i = j$.

The primary decomposition algorithm described here proceeds in 2 steps: first, find all associated primes of $M$, and second, determine valid $P_i$-primary components $Q_i$ for each associated prime $P_i$ (note that by uniqueness of associated primes from a primary decomposition, such a decomposition will automatically be minimal).
For the first step, following \cite{EHV}, we first reduce the problem of computing all associated primes of a module, to computing minimal primes of ideals:

\begin{theorem}[{\cite[Theorem 1.1]{EHV}}] \label{thm:associatedPrimesEHV}
	For any $i \ge 0$, the associated primes of $M$ of codimension $i$ are precisely the minimal primes of $\ann \Ext_R^i(M, R)$ of codimension $i$.
\end{theorem}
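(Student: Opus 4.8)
The plan is to work locally at each prime and combine the Auslander--Buchsbaum formula with the regularity of the localizations of the polynomial ring $R$. Fix $i \ge 0$ and a prime $\pp \in \Spec(R)$ of codimension $i$. Since $R$ is a polynomial ring over a field, $R_\pp$ is a regular local ring of Krull dimension exactly $\operatorname{codim}\pp = i$, hence Cohen--Macaulay with $\depth R_\pp = \gldim R_\pp = i$. As $M$ is finitely generated and $R$ is Noetherian, $\Ext$ commutes with localization, so $\Ext_R^i(M,R)_\pp \cong \Ext_{R_\pp}^i(M_\pp,R_\pp)$; in particular $\pp \supseteq \ann\Ext_R^i(M,R)$ exactly when $\Ext_{R_\pp}^i(M_\pp,R_\pp) \ne 0$. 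If $M_\pp = 0$ then both sides of the asserted equality visibly exclude $\pp$, so we may assume $N := M_\pp \ne 0$ and set $S := R_\pp$.

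Next I would record the two homological inputs over the regular local ring $(S,\pp S)$ applied to $N$. First, a minimal free resolution of $N$ together with Nakayama's lemma gives $\Ext_S^{\pd_S N}(N,S) \ne 0$, while $\Ext_S^j(N,S) = 0$ for $j > \pd_S N$, and $\pd_S N \le \gldim S = i$; hence $\Ext_S^i(N,S) \ne 0$ if and only if $\pd_S N = i$. Second, the Auslander--Buchsbaum formula gives $\pd_S N + \depth_S N = \depth S = i$, so $\pd_S N = i$ if and only if $\depth_S N = 0$, which by the usual characterization of depth via prime avoidance means $\pp S \in \Ass_S(N)$, i.e.\ $\pp \in \Ass_R(M)$. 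Chaining these, for a prime $\pp$ of codimension $i$ one gets
\[
\pp \in \Ass_R(M) \iff \depth_{R_\pp}M_\pp = 0 \iff \pd_{R_\pp}M_\pp = i \iff \Ext_{R_\pp}^i(M_\pp,R_\pp) \ne 0,
\]
and the last condition is in turn equivalent to $\pp \supseteq \ann\Ext_R^i(M,R)$.

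It remains to upgrade ``contains $\ann\Ext_R^i(M,R)$'' to ``is minimal over $\ann\Ext_R^i(M,R)$'' for codimension-$i$ primes, and here the codimension bookkeeping does the work. If $\qqq \subsetneq \pp$ were a prime with $\qqq \supseteq \ann\Ext_R^i(M,R)$, then $\operatorname{codim}\qqq \le i-1$, so $\pd_{R_\qqq}M_\qqq \le \gldim R_\qqq \le i-1 < i$, forcing $\Ext_{R_\qqq}^i(M_\qqq,R_\qqq) = 0$ and hence $\qqq \not\supseteq \ann\Ext_R^i(M,R)$ --- a contradiction. So a codimension-$i$ prime containing $\ann\Ext_R^i(M,R)$ is automatically minimal over it, and reading the equivalences of the previous paragraph in both directions yields the claimed coincidence of the two sets. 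I do not expect a genuine obstacle --- the statement is classical --- but the point requiring care is precisely the interaction between codimension and homological dimension: one must use that $R$ is a polynomial ring, so that every $R_\pp$ is regular of dimension $\operatorname{codim}\pp$, which is what simultaneously produces the vanishing of $\Ext^j$ above codimension $i$ and the strict vanishing below it; over a general Noetherian ring one would instead phrase everything in terms of the grade of $\ann M$.
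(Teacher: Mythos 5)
Your argument is correct, and it is essentially the standard proof of this result: localize at a prime $\pp$ of codimension $i$, use that $\Ext$ commutes with localization together with $\pd_{R_\pp}M_\pp \le \gldim R_\pp = i$ and the Auslander--Buchsbaum formula to get $\pp \in \Ass_R(M) \iff \Ext^i_{R_\pp}(M_\pp,R_\pp) \ne 0 \iff \pp \supseteq \ann\Ext^i_R(M,R)$, and use the vanishing of $\Ext^i$ over smaller-dimensional localizations for the minimality. The paper itself offers no proof --- it simply quotes \cite[Theorem 1.1]{EHV} --- and your reasoning matches the argument given there, so there is nothing to add.
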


In view of this, we may compute $\Ass_R(M)$ via oracles to compute (1) a free resolution of a module $M$ (and thus any Ext modules $\Ext_R^\bullet(M, \cdot)$), and (2) minimal primes of any ideal $I \subseteq R$, which we henceforth assume are given (in practice, both are well-optimized in \texttt{Macaulay2}). 
Note that following the above procedure iteratively will naturally produce a list of associated primes which are weakly ordered by codimension (e.g. all codimension 1 primes appear before any codimension 2 primes, etc.).

For the second step, namely producing valid primary components, we proceed inductively.
Order the associated primes $P_1, \ldots, P_s$ of $M$ by a linear extension of the partial order by inclusion, i.e. $P_i \subseteq P_j \implies i \le j$ (note that this is automatic if the associated primes are weakly ordered by codimension, as in the previous paragraph).
In particular, as a base case $P_1$ is a minimal prime of $M$ (i.e. a minimal prime of $\ann M$).
Primary components to minimal primes are uniquely determined, and can be obtained as follows:

\begin{proposition} \label{prop:localPrimaryComponents}
	Let $P \in \Spec(R)$, and let $M \to M_P$ be the localization map. Then:
	
	\begin{enumerate}
		
		\item \cite[Theorem 3.10(d)]{EISEN_COMM} $\ker(M \to M_P)$ equals the intersection of all $P_i$-primary components of $0$ in $M$ for $P_i \in \Ass_R(M)$, $P_i \subseteq P$ (in particular, this intersection is uniquely determined by $M$ and $P$).
		
		\item \cite[Proposition 3.13]{EISEN_COMM} Suppose $f \in R$ is such that for all $P_i \in \Ass_R(M)$, one has $f \in P_i \iff P_i \not \subseteq P$. 
		Then $\ker(M \to M_P) = \ker(M \to M_f) = 0 :_M f^\infty$.
		
	\end{enumerate}
\end{proposition}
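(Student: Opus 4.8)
The plan is to fix once and for all a minimal primary decomposition $0 = \bigcap_{i=1}^{s} Q_i$ of the zero submodule of $M$, with $Q_i$ being $P_i$-primary and $\Ass_R(M) = \{P_1,\dots,P_s\}$, and to reduce everything to two elementary facts about a $P_i$-primary submodule $Q_i$: an element $g \in R$ is a nonzerodivisor on $M/Q_i$ if and only if $g \notin P_i$ (the zerodivisors on $M/Q_i$ are the union of its associated primes, namely $P_i$), and, since $M/Q_i$ is finitely generated with $P_i = \sqrt{\ann_R(M/Q_i)}$, any $g \in P_i$ satisfies $g^{N} M \subseteq Q_i$ for $N \gg 0$. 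With these in hand the argument is pure bookkeeping: for a given $m \in M$ one only needs to control which of the $Q_i$ contain $m$, or contain a well-chosen multiple of $m$. Throughout, an empty intersection of submodules of $M$ is read as $M$ and an empty product in $R$ as $1$.

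For part (1), put $J := \bigcap_{P_i \subseteq P} Q_i$; I would prove $\ker(M \to M_P) = J$ by two inclusions. For $J \subseteq \ker(M \to M_P)$: for each $i$ with $P_i \not\subseteq P$ choose $t_i \in P_i \setminus P$ (nonempty since $P_i \not\subseteq P$) and $N_i$ with $t_i^{N_i} M \subseteq Q_i$; then $s := \prod_{P_i \not\subseteq P} t_i^{N_i}$ lies in $R \setminus P$ because $P$ is prime, and $sM \subseteq \bigcap_{P_i \not\subseteq P} Q_i$. Hence for $m \in J$ we get $sm \in J \cap \bigcap_{P_i \not\subseteq P} Q_i = \bigcap_{i=1}^{s} Q_i = 0$, so $m \in \ker(M \to M_P)$. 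For the reverse inclusion, if $m \in \ker(M \to M_P)$ there is $s \in R \setminus P$ with $sm = 0$; for every $i$ with $P_i \subseteq P$ we have $s \notin P \supseteq P_i$, so $s$ is a nonzerodivisor on $M/Q_i$, and $sm = 0 \in Q_i$ forces $m \in Q_i$, whence $m \in J$. Finally, $\ker(M \to M_P)$ manifestly does not depend on the chosen decomposition, so neither does $J$, which is the uniqueness assertion; and by construction $J$ is the intersection of the $P_i$-primary components of $0$ with $P_i \subseteq P$.

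For part (2), I would first record that $\ker(M \to M_f) = \{ m \in M \mid f^{k} m = 0 \text{ for some } k \geq 0 \} = 0 :_M f^{\infty}$ directly from the description of $M_f$ as the localization at the powers of $f$. The hypothesis on $f$ says exactly that $f \in P_i \iff P_i \not\subseteq P$; hence $f$ is a nonzerodivisor on $M/Q_i$ whenever $P_i \subseteq P$, while $f \in P_i = \sqrt{\ann_R(M/Q_i)}$ whenever $P_i \not\subseteq P$, so $f^{N} M \subseteq Q_i$ for some $N$ and all such $i$ simultaneously. Re-running the two inclusions of part (1) with $f^{N}$ in the role of $s$ (for $\supseteq$) and with powers $f^{k}$ in the role of $s$ (for $\subseteq$, using that $f^k$ is a nonzerodivisor on $M/Q_i$ when $P_i \subseteq P$) gives $\ker(M \to M_f) = \bigcap_{P_i \subseteq P} Q_i$, and by part (1) this equals $\ker(M \to M_P)$. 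For the application one also notes that such an $f$ exists by prime avoidance: $\bigcap_{P_i \not\subseteq P} P_i$ is contained in no $P_j$ with $P_j \subseteq P$, hence not in their union.

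I do not expect a genuine obstacle here: the two statements are precisely \cite[Theorem 3.10(d)]{EISEN_COMM} and \cite[Proposition 3.13]{EISEN_COMM}. The only point deserving care is that $P$ is an arbitrary prime, not assumed to lie in $\Ass_R(M)$; when no $P_i$ is contained in $P$ one has $M_P = 0$ and $0 :_M f^\infty = M$, and the empty-intersection convention makes both sides equal to $M$, so the statement degenerates gracefully. The one input that must not be glossed over is the identification of the zerodivisors on a $P_i$-primary module $M/Q_i$ with $P_i$ itself; everything else is formal.
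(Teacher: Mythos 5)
Your proof is correct: both inclusions in part (1), the identification $\ker(M \to M_f) = 0 :_M f^\infty$, and the reduction of part (2) to part (1) are exactly the standard arguments behind the results the paper cites without proof (Eisenbud, Theorem 3.10(d) and Proposition 3.13), with the key inputs — zerodivisors on the $P_i$-primary quotient $M/Q_i$ being precisely $P_i$, and $g^N M \subseteq Q_i$ for $g \in P_i$ — correctly isolated. Only a cosmetic caveat: you reuse the letter $s$ both for the number of components and for the element $\prod t_i^{N_i}$, which you should disambiguate in a written version.
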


An element $f$ as in \autoref{prop:localPrimaryComponents}(2) can be obtained as follows: for each associated prime $P_i$ not contained in $P$, choose a generator $g_i$ of $P_i$ not contained in $P$; then take $f := \prod_{P_i \not \subseteq P} g_i$.
Taking $P = P_i$ for some minimal prime $P_i$ of $M$ in \autoref{prop:localPrimaryComponents} shows that given an oracle to compute saturations, we may obtain (the unique) primary components corresponding to minimal primes of $M$.

It then remains to compute a valid $P$-primary component $Q$, for an embedded prime $P$.
In this case such a $Q$ is not unique; indeed there are always infinitely many valid choices for $Q$.
The next proposition gives one class of such choices:

\begin{proposition}[\cite{EHV}, p. 27-28] \label{prop:embCompCandidate}
	For $P \in \Ass_R(M)$ and $j \ge 0$, fix generators $P = (f_1, \ldots, f_m)$ for $P$, and set $P^{[j]} := (f_1^j, \ldots, f_m^j)$. 
	Then for $j \gg 0$, the submodule $Q_{[j]} := \hull(P^{[j]}M, M)$ is a valid $P$-primary component of $0$ in $M$.
\end{proposition}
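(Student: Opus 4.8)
\emph{Overview and Step 1.} The plan is to compare $Q_{[j]}$ against one fixed honest $P$-primary component of $0$ in $M$, and to show that for $j \gg 0$ the former sits beneath the latter, so that it inherits validity. First I would record the structure of $P^{[j]}M$. Since $\sqrt{P^{[j]}} = P$, one has $\Supp(M/P^{[j]}M) = \Supp(M) \cap \Supp(R/P^{[j]}) = \{\qqq \in \Supp(M) \mid \qqq \supseteq P\}$, and since $P \in \Ass(M) \subseteq \Supp(M)$ this set has $P$ as its unique minimal element. Hence $P$ is the only minimal prime of $M/P^{[j]}M$, so $\hull(P^{[j]}M, M)$ is exactly the (unique) $P$-primary component of $P^{[j]}M$ in $M$; in particular $Q_{[j]}$ is $P$-primary (and $M/Q_{[j]} \ne 0$, as $(P^{[j]}M)_P \subsetneq M_P$ by Nakayama). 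Localizing at $P$ and using that the hull does not alter the localization at a minimal prime, $(Q_{[j]})_P = (P^{[j]}M)_P = P^{[j]}M_P = \mathfrak{m}^{[j]} M_P$, where $\mathfrak{m} := P R_P$ and $\mathfrak{m}^{[j]} := (f_1^j,\ldots,f_m^j) R_P$, the images of the $f_i$ generating $\mathfrak{m}$.

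\emph{Step 2 (a finite-length bound).} By Lasker--Noether, fix a primary decomposition $0 = Q_1 \cap Q_2 \cap \cdots \cap Q_s$ in $M$ with each $Q_i$ being $P_i$-primary and $P_1 = P$ (possible as $P \in \Ass M$). Localizing at $P$, the submodule $(Q_1)_P$ is an $\mathfrak{m}$-primary component of $0$ in $M_P$, and $M_P/(Q_1)_P = (M/Q_1)_P$ is a nonzero finitely generated $R_P$-module with $\Ass_{R_P} = \{\mathfrak{m}\}$ (because $M/Q_1$ is $P$-coprimary), hence of finite length over $R_P$. Therefore there is an integer $N$ with $\mathfrak{m}^N M_P \subseteq (Q_1)_P$.

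\emph{Step 3 (conclusion).} For $j \ge N$, using the elementary containment $\mathfrak{m}^{[j]} \subseteq \mathfrak{m}^j$ I get $(Q_{[j]})_P = \mathfrak{m}^{[j]} M_P \subseteq \mathfrak{m}^j M_P \subseteq \mathfrak{m}^N M_P \subseteq (Q_1)_P$. Both $Q_{[j]}$ and $Q_1$ are $P$-primary submodules of $M$, so each is the preimage under $M \to M_P$ of its own localization at $P$ (if $s\notin P$ and $sm$ lies in a $P$-primary submodule $Q$, then $m\in Q$); hence $(Q_{[j]})_P \subseteq (Q_1)_P$ forces $Q_{[j]} \subseteq Q_1$. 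Consequently $0 \subseteq Q_{[j]} \cap \bigcap_{i\ge 2} Q_i \subseteq Q_1 \cap \bigcap_{i\ge 2} Q_i = 0$, so $0 = Q_{[j]} \cap \bigcap_{i\ge 2} Q_i$ is a primary decomposition of $0$ in $M$ having $Q_{[j]}$ as its $P$-primary component. Thus $Q_{[j]}$ is a valid $P$-primary component of $0$ in $M$ for all $j \ge N$, which is the claim.

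\emph{Main obstacle.} The part requiring care is the bookkeeping between $\hull$ and localization — namely that when $M/N$ has a unique minimal prime $P$ the hull is precisely the $P$-primary component of $N$ and localizes back to $N_P$ — together with the small but essential lemma that a $P$-primary submodule is recovered from its localization at $P$ as a preimage. Everything else ($\mathfrak{m}^{[j]} \subseteq \mathfrak{m}^j$, and finite length implying annihilation by a power of $\mathfrak{m}$) is routine, and the argument is insensitive to the hypotheses on $\kk$ and $R$.
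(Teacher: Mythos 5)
Your proof is correct. The paper itself gives no argument for this proposition (it simply cites Eisenbud--Huneke--Vasconcelos, pp.~27--28), and your proof is essentially the standard one underlying that reference: identify $\hull(P^{[j]}M,M)$ as the unique $P$-primary component of $P^{[j]}M$ (so it localizes to $P^{[j]}M_P$), use finite length of $(M/Q_1)_P$ to get $(Q_{[j]})_P\subseteq (Q_1)_P$ for $j\gg 0$, recover $Q_{[j]}\subseteq Q_1$ from $P$-primariness, and swap $Q_{[j]}$ for $Q_1$ in a fixed minimal decomposition (no component becomes redundant, by uniqueness of the associated primes of a minimal decomposition).
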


Here $\hull(N, M)$ is the equidimensional hull of $N$ in $M$, i.e. the intersection of all primary components of $N$ of maximal dimension.
There are a number of ways to compute $Q_{[j]}$: the first is via \autoref{prop:localPrimaryComponents}, viz. $Q_{[j]} = \ker(M \to (M/P^{[j]}M)_P)$.
Another method is given in \cite[Theorem 1.1(2)]{EHV}, which is a general way to compute hulls via iterated Ext modules, and yet another method is given in \cite[Algorithm 1.2]{EHV}.

To find a stopping criterion for the exponent $j$ in \autoref{prop:embCompCandidate}, note that inductively, we may assume that $P_i$-primary components for any $P_i \subsetneq P$ have already been computed, and so their intersection $\mathcal{V} := \bigcap_{P_i \subsetneq P} Q_i$ is also known.
By \autoref{prop:localPrimaryComponents}, we also know $\mathcal{U} := \bigcap_{P_i \subseteq P} Q_i$.
Then for any $j \ge 0$, the submodule $Q_{[j]}$ in \autoref{prop:embCompCandidate} is a valid $P$-primary component if and only if $Q_{[j]} \cap \mathcal{V} = \mathcal{U}$.
We may thus find a valid $P$-primary component as follows: initialize $j$ at some starting value, and compute $Q_{[j]}$.
If $Q_{[j]} \cap \mathcal{V} \ne \mathcal{U}$, then increment $j$, and repeat until a valid candidate $Q_{[j]}$ is found.

\begin{remark}
	A few remarks are in order concerning efficiency of the algorithm described above:
	\begin{enumerate}
		\item Both choices of starting value of $j$, and the function used to increment $j$, are relevant considerations for efficiency of the algorithm.
		If the starting value of $j$ is too small, or the increment function grows too slowly, then invalid candidates may be computed many times.
		On the other hand, the computation time for $Q_{[j]}$ tends to increase as $j$ increases, so it is desirable not to take $j$ unnecessarily large.
		The current implementation in \texttt{Macaulay2} uses the increment $j \to \lceil \frac{3j}{2} \rceil$ (the starting value is more complicated, depending on the degrees of generators of $P$ and of $\ann M$).
		
		\item The use of bracket powers $P^{[j]}$ rather than ordinary powers $P^j$ in \autoref{prop:embCompCandidate} is also for efficiency: if either $j$ or the number of generators of $P$ is large, then $P^j$ may take much longer to compute than $P^{[j]}$.
		
		\item Of the three methods given above for computing embedded components $Q_{[j]}$, usually the first method (namely as a kernel of a localization map) is the most efficient, although this is not always the case (in some examples, the second method can be drastically faster).
		Note that in the first method, it is necessary to compute $\Ass_R(M/P^{[j]}M)$, which is in general strictly bigger than $\{ P \}$.
		
		\item The necessity of realizing $\ker(M \to M_P)$ as a saturation in \autoref{prop:localPrimaryComponents}(2) stems from the fact that $M_P$ typically does not have a finite presentation as an $R$-module. 
		Although one could also express $\ker(M \to M_P)$ as a saturation $0 :_M (P')^\infty$, where $P'$ equals the intersection of all associated primes of $M$ not contained in $P$, it is almost always much more efficient to compute saturations by a single element, than by general ideals.
		
		\item As a general rule, computation of associated primes is the most time-consuming step in this procedure.
		Once the associated primes are known, the minimal primary components tend to be computed very quickly, and the time for computing the embedded components can vary based on the method chosen (cf. point (3) above).
		In particular, this algorithm tends to perform well for modules whose free resolutions can be cheaply computed (e.g. when the number of variables is small).
		
	\end{enumerate}
\end{remark}

The main differences between the algorithm presented here and the one given in \cite[Section 4]{EHV} are the stopping criterion for the exponent $j$ in \autoref{prop:embCompCandidate}, the use of bracket powers as opposed to ordinary powers, and the methods used to compute hulls.
In summary, the algorithm described here reduces general primary decomposition for modules to the following tasks (some of which can be seen as special cases of primary decomposition):

\begin{enumerate}
	\item computation of Ext modules (in fact, $\Ext_R^\bullet(\cdot, R)$ suffices),
	
	\item computation of minimal primes of an ideal $I \subseteq R$,
	
	\item computation of colon modules (of which saturations and annihilators are special cases),
	
	\item computation of intersections of submodules.
\end{enumerate}

\section{A representation theorem for primary submodules}
\label{section_main_thm}

We now leave the classical picture, and adopt a differential point of view.
Our main theorem in this section parametrizes primary submodules of a free module of finite rank in terms of \emph{punctual Quot schemes}, \emph{vector spaces closed under differentiation} and \emph{subbimodules of the Weyl-Noether module}.
This extends the main result of \cite{PRIM_IDEALS_DIFF_EQS} to the case of modules. 
As a simple corollary, we also obtain a representation theorem for primary submodules of an arbitrary module.

\begin{setup} \label{setup_1}
For the rest of this section we fix the following notation:
\begin{itemize}[--]
\item Let $\kk$ be a field of characteristic zero, and $R := \kk[x_1,\ldots,x_n]$ a polynomial ring over $\kk$.
\item For an integer $r \ge 0$, let $R^r$ be a free $R$-module of rank $r$.
\item Let $\pp \in \Spec(R)$ be a prime ideal with codimension $c := \HT(\pp)$.
\item The residue field of $\pp$ is denoted $\FF := k(\pp) = \Quot(R/\pp) = R_\pp/\pp R_\pp$.
\item A subset of variables $\{ x_{i_1}, \ldots, x_{i_\ell} \} \subseteq \{x_1,\ldots,x_n\}$ is \emph{independent modulo $\pp$} if their images in $R/\pp$ are algebraically independent over $\kk$, or equivalently $\kk[x_{i_1}, \ldots, x_{i_\ell}] \cap \pp = \{0\}$.
After possibly permuting the variables, we may assume that $\{x_{c+1},\ldots,x_n\}$ is a \emph{basis modulo $\pp$}, i.e. a maximal set of independent variables modulo $\pp$ (see \cite[Example 13.2]{Mateusz}).
\item Let $\LL := \kk(x_{c+1},\ldots,x_n)$ denote the field of rational functions in the basis variables (which is a purely transcendental extension of $\kk$), and $S$ be the polynomial ring 
$
S := \kk(x_{c+1},\ldots,x_n)[x_1,\ldots,x_c]$ (which is a localization of $R$, as $S \cong \LL \otimes_{\kk[x_{c+1},\ldots,x_n]} R$).
\item The \emph{Weyl algebra} and the \emph{relative Weyl algebra} are denoted by $D_n := R \big< \partial_{x_1},\ldots,\partial_{x_n} \big>$ and $D_{n,c} := R \big< \partial_{x_1},\ldots,\partial_{x_c} \big> \subseteq D_n$, respectively.
\item The \emph{multiplicity} of a $\pp$-primary submodule $U \subseteq R^r$ is defined as $\leng_{R_\pp}\left(R_\pp^r/U_\pp\right)$.
\item For an integer $m \ge 0$, the \emph{punctual Quot scheme} is a parameter space ${\rm Quot}^m\left(\FF[[y_1,\ldots,y_c]]^r\right)$ whose $\FF$-points parametrize all $\FF[[y_1,\ldots,y_c]]$-submodules $\VV \subseteq \FF[[y_1,\ldots,y_c]]^r$ of colength $m$, i.e. which satisfy
$\dim_\FF\left(\FF[[y_1,\ldots,y_c]]^r/\VV\right) = m$.
\item We say that $\delta_1,\ldots,\delta_m \in (D_n)^r \cong \DiffR(R^r,R)$ is a set of \emph{Noetherian operators} for a $\pp$-primary submodule $U \subseteq R^r$ if the following equality holds
\begin{equation} \label{eq_noeth_ops}
U = \big\lbrace w \in R^r \mid \delta_i(w) \in \pp \text{ for all } 1 \le i \le m \big\rbrace.
\end{equation}
\end{itemize}
\end{setup}

We can now state our main result:

\begin{theorem} 
	\label{thm_main}
	The following four sets of objects are in bijective correspondence:
	\begin{enumerate}[(a)]
		\item \label{part_a} $\pp$-primary $R$-submodules $U \subseteq R^r$ of multiplicity $m$ over $\pp$, 
		\item \label{part_b} $\FF$-points in the punctual Quot scheme ${\rm Quot}^m\left(\FF[[y_1,\ldots,y_c]]^r\right)$,
		\item \label{part_c} $m$-dimensional $\FF$-subspaces of $\,\FF[z_1,\ldots,z_c]^r$ that are closed under differentiation,
		\item \label{part_d}  $m$-dimensional $\FF$-subspaces of
		the Weyl-Noether module $\FF \otimes_R  (D_{n,c})^r$ that are $R$-bimodules. 
	\end{enumerate}
	Moreover, any basis of the $\FF$-subspace in part \hyperref[part_d]{(d)}  can be lifted to Noetherian operators $\delta_1,\ldots,\delta_m \in (D_{n,c})^r$ for the $R$-submodule $U$ in part \hyperref[part_a]{(a)}.
\end{theorem}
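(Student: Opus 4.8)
The plan is to establish the four bijections by constructing the composite chain (a) $\leftrightarrow$ (b) $\leftrightarrow$ (c) $\leftrightarrow$ (d), and then verifying that the final ``Moreover'' claim falls out of the explicit description of the map (d) $\to$ (a). I expect the hard analytic/algebraic core to live in the passage (a) $\leftrightarrow$ (b), where one must pass from a $\pp$-primary submodule $U \subseteq R^r$ to a finite-colength submodule of a power series module over the residue field; everything downstream is essentially linear-algebraic duality (Macaulay inverse systems) together with an identification of differential operators with formal power series.

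\textbf{Step 1: Reduce to the maximal-ideal case by localization.} First I would localize at $\pp$ and pass to the ring $S = \LL\otimes_{\kk[x_{c+1},\dots,x_n]} R = \kk(x_{c+1},\dots,x_n)[x_1,\dots,x_c]$ of \autoref{setup_1}, which is a localization of $R$ in which the image $\mathfrak{n}$ of $\pp$ is a maximal ideal with residue field $\FF$. Since $U$ is $\pp$-primary, $U_\pp = U\otimes_R S$ determines and is determined by $U$ (as $U = \{w \in R^r : w \in U_\pp\}$ by $\pp$-primariness, i.e. $U$ is the contraction of $U S^r$), so it suffices to parametrize $\mathfrak{n}$-primary submodules of $S^r$ of colength $m$. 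Completing at $\mathfrak{n}$, we get $\widehat{S} \cong \FF[[x_1,\dots,x_c]]$ (here using that $S/\mathfrak{n} = \FF$ and that $x_1,\dots,x_c$ form a regular system of parameters — this uses the choice of basis variables), and an $\mathfrak{n}$-primary $U S^r$ corresponds to the $\widehat{S}$-submodule $\widehat{U} \subseteq \widehat{S}^r$ with $\dim_\FF(\widehat{S}^r/\widehat{U}) = m$ finite. This is exactly an $\FF$-point of $\Quot^m(\FF[[y_1,\dots,y_c]]^r)$ after renaming $x_i \mapsto y_i$. The main subtlety to check here is that completion is faithful on finite-colength submodules and that colength is preserved, plus that one can go back: a finite-colength $\widehat{S}$-submodule contracts to an $\mathfrak{n}$-primary $S$-submodule of the same colength, which in turn contracts to a $\pp$-primary $U \subseteq R^r$. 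This is where I expect to spend the most care.

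\textbf{Step 2: Macaulay duality, (b) $\leftrightarrow$ (c).} Apply Macaulay's inverse system / Matlis duality over $\FF[[y_1,\dots,y_c]]$: the functor $\VV \mapsto \VV^\perp \subseteq \mathrm{Hom}_\FF(\FF[[y_1,\dots,y_c]]^r,\FF)$, where the target is identified with the divided-power (equivalently, since $\mathrm{char}\,\kk=0$, polynomial) module $\FF[z_1,\dots,z_c]^r$ with $\FF[[y_1,\dots,y_c]]$ acting by contraction/differentiation. Submodules of colength $m$ correspond bijectively (inclusion-reversingly) to $\FF[[y]]$-submodules of $\FF[z]^r$ of dimension $m$, and being an $\FF[[y]]$-submodule under the contraction action is precisely the condition of being closed under the partial derivatives $\partial_{z_1},\dots,\partial_{z_c}$, i.e. ``closed under differentiation.'' I would cite the module version of Macaulay duality (or prove it quickly: finite length modules over a complete local ring are reflexive under Matlis duality). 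That $m$-dimensionality is preserved is immediate from $\dim_\FF$ of a module equalling that of its $\FF$-dual.

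\textbf{Step 3: (c) $\leftrightarrow$ (d) and the lift to Noetherian operators.} Identify $\FF\otimes_R D_{n,c}^r$ with $\FF[z_1,\dots,z_c]^r$ by sending $\partial_{x_i} \mapsto z_i$ for $1\le i\le c$ (the variables $x_i$ act through $\FF$, and the higher $\partial_{x_j}$, $j>c$, do not appear). Under this identification, ``closed under differentiation'' on the (c) side matches ``stable under right multiplication by the $\partial_{x_i}$,'' while left multiplication by $R$ factors through $\FF$; together these make the subspace an $R$-subbimodule, giving (c) $\leftrightarrow$ (d) essentially by unwinding definitions. Finally, for the ``Moreover'' statement: given an $\FF$-basis $\bar\delta_1,\dots,\bar\delta_m$ of the subspace in (d), lift each to $\delta_i \in D_{n,c}^r$. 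Tracing the chain (d) $\to$ (c) $\to$ (b) $\to$ (a) backwards, the submodule $U$ is the contraction to $R^r$ of the perpendicular space to $\mathrm{span}_\FF(\bar\delta_1,\dots,\bar\delta_m)$ under the pairing $D_{n,c}^r \times R^r \to \FF$, $(\delta, w) \mapsto \overline{\delta(w)}$ (value of the operator applied to $w$, reduced mod $\pp$). Hence $w \in U \iff \overline{\delta(w)} = 0$ in $\FF$ for all $\delta$ in the span $\iff \delta_i(w)\in\pp$ for $i=1,\dots,m$, which is exactly \autoref{eq_noeth_ops}. The one point requiring an argument is that this perpendicularity pairing, after localizing/completing, is the same as the Macaulay duality pairing of Step 2 — i.e. that ``apply the differential operator and evaluate'' corresponds to ``contract the power series against the polynomial functional,'' which is the standard dictionary between constant-coefficient differential operators (over $\FF$) and the dual power series module, extended $\FF$-linearly over the base. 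I would isolate this as a lemma.

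Overall, the genuine obstacle is Step 1 — controlling the interaction of $\pp$-primariness, localization at $\pp$, passage to the complete local ring $\FF[[y_1,\dots,y_c]]$, and the contraction back to $R^r$ — since Steps 2 and 3 are formal once the correct dualities and identifications are set up.
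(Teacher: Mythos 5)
Your outline follows the paper's own route: (a)$\leftrightarrow$(b) by passing from $R$ to the localization $S$ and then to the power series ring $\FF[[y_1,\ldots,y_c]]$ (the paper packages this as the map $\gamma$ of \autoref{eq_map_gamma} together with \cite[Proposition 1]{PRIM_IDEALS_DIFF_EQS} and \autoref{thm:param_primary}); (b)$\leftrightarrow$(c) by Matlis/Macaulay duality (\autoref{thm:Macaulay_dual}); (c)$\leftrightarrow$(d) by the identification $\mathbf{z}^\alpha \leftrightarrow \overline{\partial_{\mathbf{x}}^\alpha}$; and the ``Moreover'' by lifting via \autoref{lem_descript_diff_op} and checking that the operator-evaluation pairing agrees with the Macaulay duality pairing, which is exactly the comparison of $\mathcal{E}$ and $\mathcal{F}$ carried out at the end of the paper's proof. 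So the architecture is essentially identical and correct in outline.

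Two local claims need repair, both at points you flag as delicate. First, in Step 1, the variables $x_1,\ldots,x_c$ are in general \emph{not} a regular system of parameters of $S_{\pp S}$ --- they need not even lie in $\pp$ (e.g. $\pp=(x_1-x_3,\,x_2-x_4)$ with basis $\{x_3,x_4\}$, where $x_1,x_2$ are units at $\pp$). To identify $\widehat{S_{\pp S}}$ with $\FF[[y_1,\ldots,y_c]]$ you must first choose a coefficient field $\FF\hookrightarrow \widehat{S_{\pp S}}$ (this uses characteristic zero and that $\FF$ is algebraic over $\LL$, which is where the choice of basis variables actually enters) and then take the regular system of parameters $x_i-\overline{x_i}$; this is precisely what the paper's map $\eta$, $x_i\mapsto y_i+\overline{x_i}$, encodes. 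Second, in Step 3, closure under $\partial_{z_i}$ on the (c) side corresponds under $\Omega$ to closure under the bracket $[\delta,x_i]$, i.e. under the right $R$-action by $x_i$ (the left action being through $\FF$), not to ``right multiplication by $\partial_{x_i}$'': the latter corresponds to multiplication by $z_i$, which is not the condition defining (c). With these two corrections your argument coincides with the paper's proof; the remaining ingredients (faithfulness of completion on finite-colength submodules, Matlis duality for finite length modules, surjectivity of $D_{n,c}\to \FF\otimes_R D_{n,c}$ for the lifting, and the compatibility-of-pairings lemma you isolate) are exactly the ones the paper uses.
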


We structure the proof of \autoref{thm_main} as follows.
The correspondence \hyperref[part_a]{(a)} $\leftrightarrow$ \hyperref[part_b]{(b)} is detailed in \autoref{thm:param_primary}.
The map $\gamma$ defined in \autoref{eq_map_gamma} yields a bijection
\begin{equation} \label{eq_corr_a_b}
\begin{array}{ccc}
	\left\lbrace\begin{array}{c}
		\mbox{$\pp$-primary $R$-submodules of $R^r$}\\
		\mbox{of multiplicity $m$ over $\pp$}
	\end{array}\right\rbrace
	
	& \longleftrightarrow &
	
	\left\lbrace\begin{array}{c}
		\mbox{points in }{\rm Quot}^m(\FF[[y_1,\ldots,y_c]]^r)\\
	\end{array}\right\rbrace\\
	
	U & \longrightarrow & \VV = \FF[[y_1,\ldots,y_c]] \cdot \gamma(U) + (y_1,\ldots,y_c)^m\FF[[y_1,\ldots,y_c]]^r\\
	U=\gamma^{-1}(\VV) & \longleftarrow & \VV.
\end{array}
\end{equation}

The correspondence \hyperref[part_b]{(b)} $\leftrightarrow$ \hyperref[part_c]{(c)} is detailed in \autoref{thm:Macaulay_dual}.
We regard the polynomial ring $\FF[z_1,\ldots,z_c]$ as an $\FF[[y_1,\ldots,y_c]]$-module by letting $y_i$ act as $\partial_{z_i}$, i.e. $y_i \cdot F := \frac{\partial F}{\partial_{z_i}}$ for any $F \in \FF[z_1,\ldots,z_c]$.
By Macaulay inverse systems (see also \autoref{eq_V_perp}, \autoref{eq_W_perp}) we have a bijection
\begin{equation}
	\label{eq_corr_b_c}
\begin{array}{ccc}
	\left\lbrace\begin{array}{c}
		\mbox{points in }{\rm Quot}^m\left(\FF[[y_1,\ldots,y_c]]^r\right)\\
	\end{array}\right\rbrace
	
	& \longleftrightarrow &
	
	\left\lbrace\begin{array}{c}
		\mbox{$m$-dimensional $\FF$-subspaces of}\\
		\mbox{$\FF[z_1,\ldots,z_c]^r$ closed under differentiation}
	\end{array}\right\rbrace\\

	\VV & \longrightarrow & \WW=\VV^\perp\\
	\VV= \WW^\perp & \longleftarrow & \WW.\\
\end{array}
\end{equation}

Finally, the correspondence \hyperref[part_c]{(c)} $\leftrightarrow$ \hyperref[part_d]{(d)} is detailed in \autoref{subsect_proof_represent}.
The map $\Omega$ defined in \autoref{eq_map_omega} yields a bijection 
\begin{equation}
	\label{eq_corr_c_d}
\begin{array}{ccc}
	\left\lbrace\begin{array}{c}
		\mbox{$m$-dimensional $\FF$-subspaces of}\\
	\mbox{$\FF[z_1,\ldots,z_c]^r$ closed under differentiation}
	\end{array}\right\rbrace
	
	& \longleftrightarrow &
	
	\left\lbrace\begin{array}{c}
		\mbox{$m$-dimensional $\FF$-subspaces of}\\
		\mbox{$\FF \otimes_R  (D_{n,c})^r$ that are $R$-bimodules. }
	\end{array}\right\rbrace\\

	\WW & \longrightarrow & \mathcal{E}= \Omega(W)\\
	\WW= \Omega^{-1}(\mathcal{E}) & \longleftarrow & \mathcal{E}.\\
\end{array}
\end{equation}
Furthermore, by \autoref{lem_descript_diff_op}, we can lift elements from $\FF \otimes_R D_{n,c}$ to $D_{n,c}$.

\medskip

We now extend \autoref{thm_main} to arbitrary modules.
Let $M$ be a finitely generated $R$-module which can be generated by $r$ elements, so that there is a short exact sequence of $R$-modules
\begin{equation} \label{eq_present_M}
	0 \to K \to R^r \to M \to 0.
\end{equation}
Let $U \subseteq M$ be a $\pp$-primary $R$-submodule of $M$ of multiplicity $m = \text{length}_{R_\pp}(M_\pp/U_\pp)$ over $\pp$.
There is a unique $R$-submodule $\widetilde{U} \subseteq R^r$ containing $K$ such that $\widetilde{U} / K \cong U$, which we call the \emph{lift} of $U$ to $R^r$.
Since $R^r/\widetilde{U} \cong M/U$, it follows that $\widetilde{U}$ is a $\pp$-primary submodule of $R^r$ with the same multiplicity as $U$ over $\pp$.
This convenient fact allows us to lift $\pp$-primary submodules of $M$ to $\pp$-primary submodules of $R^r$.
In terms of the syzygies $K \subseteq R^r$ of $M$ in \autoref{eq_present_M}, we define the following objects:
\begin{itemize}[--]
	\item Let $\VV' \subseteq \FF[[y_1,\ldots,y_c]]^r$ be the $\FF[[y_1,\ldots,y_c]]$-submodule 
	$$
	\VV' := \FF[[y_1,\ldots,y_c]] \cdot \gamma(K) + (y_1,\ldots,y_c)^m\FF[[y_1,\ldots,y_c]]^r \subseteq \FF[[y_1,\ldots,y_c]]^r.
	$$
	\item Let $\WW' := (\VV')^\perp$ be the corresponding $m$-dimensional $\FF$-subspace of $\FF[z_1,\ldots,z_c]^r$ closed under differentiation.
	\item Let $\mathcal{E}' := \Omega(W')$ be the resulting $m$-dimensional $\FF$-subspace of $\FF \otimes_R  (D_{n,c})^r$ which is an $R$-bimodule.
\end{itemize}

We can now state the extension of \autoref{thm_main} to an arbitrary finitely generated $R$-module.

\begin{corollary}
	\label{cor_main}
	With the above notation, the following four sets of objects are in bijective correspondence:
	\begin{enumerate}[(a)]
		\item \label{part_aa} $\pp$-primary $R$-submodules $U \subseteq M$ of multiplicity $m$ over $\pp$, 
		\item \label{part_bb} $\FF$-points $\VV \subseteq \FF[[y_1,\ldots,y_c]]^r$ in the punctual Quot scheme ${\rm Quot}^m\left(\FF[[y_1,\ldots,y_c]]^r\right)$ with $\VV \supset \VV'$,
		\item \label{part_cc} $m$-dimensional $\FF$-subspaces $\WW \subseteq \FF[z_1,\ldots,z_c]^r$ that are closed under differentiation with $\WW \subseteq \WW'$,
		\item \label{part_dd}  $m$-dimensional $\FF$-subspaces $\mathcal{E} \subseteq \FF \otimes_R  (D_{n,c})^r$ of
		the Weyl-Noether module that are $R$-bimodules with $\mathcal{E} \subseteq \mathcal{E}'$.  
	\end{enumerate}
	Moreover, any basis of the $\FF$-subspace in part \hyperref[part_dd]{(d)} can be lifted to Noetherian operators $\delta_1,\ldots,\delta_m \in (D_{n,c})^r$ for the lift $\widetilde{U} \subseteq R^r$ of the $R$-submodule $U$ in part \hyperref[part_aa]{(a)}.
\end{corollary}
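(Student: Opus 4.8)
The plan is to deduce the corollary from \autoref{thm_main} by transporting everything through the lift operation $U \mapsto \widetilde U$. First I would establish that $U \mapsto \widetilde U$ is a bijection between $\pp$-primary $R$-submodules of $M$ of multiplicity $m$ over $\pp$ and $\pp$-primary $R$-submodules $\widetilde U \subseteq R^r$ of multiplicity $m$ over $\pp$ that \emph{contain} $K$. Surjectivity and injectivity are immediate from the correspondence theorem for submodules of $R^r/K \cong M$: every submodule of $M$ is uniquely $\widetilde U/K$ for a unique $\widetilde U \supseteq K$, and $R^r/\widetilde U \cong M/U$ shows that $\widetilde U$ is $\pp$-primary of the same multiplicity (this is exactly the discussion preceding the corollary, so it may be cited). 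Thus part \hyperref[part_aa]{(a)} of the corollary is in bijection with the subset of part \hyperref[part_a]{(a)} of \autoref{thm_main} cut out by the condition $\widetilde U \supseteq K$.

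The heart of the argument is then to chase the single inclusion $K \subseteq \widetilde U$ through the three bijections $\gamma$, $(-)^\perp$, and $\Omega$ of \autoref{thm_main}, and check that it translates into the three stated conditions $\VV \supseteq \VV'$, $\WW \subseteq \WW'$, $\mathcal E \subseteq \mathcal E'$. For the step \hyperref[part_a]{(a)} $\leftrightarrow$ \hyperref[part_b]{(b)}: the map $\gamma$ (defined in \autoref{eq_map_gamma}) is monotone with respect to inclusion — this should follow from its construction as essentially a completion/localization operation — so $K \subseteq \widetilde U$ gives $\gamma(K) \subseteq \gamma(\widetilde U)$, and adding $(y_1,\dots,y_c)^m \FF[[y_1,\dots,y_c]]^r$ to both sides yields $\VV' \subseteq \VV$ where $\VV = \gamma(\widetilde U) + (\mathbf y)^m(\cdots)$. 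Conversely, if $\VV \supseteq \VV'$ then applying $\gamma^{-1}$ and using that $\gamma^{-1}$ is order-preserving recovers $\widetilde U \supseteq K$ (one must be slightly careful: $\gamma^{-1}$ is only defined on Quot-scheme points, but $\VV'$ need not have colength $m$; here one should argue that $\gamma^{-1}(\VV)$ contains $\gamma^{-1}$ of any Quot point between $\VV'$ and $\VV$, or simply note that $\widetilde U \supseteq K$ $\iff$ $\widetilde U_\pp \supseteq K_\pp$ and work after localization, where $\gamma$ is built). For \hyperref[part_b]{(b)} $\leftrightarrow$ \hyperref[part_c]{(c)}: Macaulay duality $(-)^\perp$ is inclusion-reversing, so $\VV \supseteq \VV'$ $\iff$ $\VV^\perp \subseteq (\VV')^\perp$, i.e. $\WW \subseteq \WW'$. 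For \hyperref[part_c]{(c)} $\leftrightarrow$ \hyperref[part_d]{(d)}: the map $\Omega$ (defined in \autoref{eq_map_omega}) is again monotone — likely an $\FF$-linear isomorphism of ambient spaces restricted to the relevant subspaces — so $\WW \subseteq \WW'$ $\iff$ $\Omega(\WW) \subseteq \Omega(\WW')$, that is $\mathcal E \subseteq \mathcal E'$. Stringing these equivalences together identifies the four subsets, and the bijections of \autoref{thm_main} restrict to bijections between them.

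Finally, for the ``moreover'' clause: a basis of $\mathcal E$ lifts to Noetherian operators $\delta_1,\dots,\delta_m \in D_{n,c}^r$ for $\widetilde U$ by the corresponding clause of \autoref{thm_main} applied to the $\pp$-primary submodule $\widetilde U \subseteq R^r$, since $\widetilde U$ has multiplicity $m$ and corresponds to $\mathcal E$ under \hyperref[part_a]{(a)} $\leftrightarrow$ \hyperref[part_d]{(d)}. That is all that is asserted, so nothing further is needed (one could optionally remark that \autoref{eq_noeth_ops} for $\widetilde U$ descends modulo $K$ to a description of $U$ inside $M/U \cong R^r/\widetilde U$, but this is not part of the statement).

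The main obstacle I anticipate is the order-theoretic bookkeeping at the \hyperref[part_a]{(a)} $\leftrightarrow$ \hyperref[part_b]{(b)} step: $\gamma$ and $\gamma^{-1}$ are a priori only set up as mutually inverse bijections between two specific classes of objects (submodules of fixed colength), not as an order isomorphism on all submodules, and $\VV'$ itself is generally not a Quot point. Making the monotonicity statement precise — ideally by reducing to the local ring $R_\pp$ and invoking the explicit construction of $\gamma$ from \autoref{thm:param_primary} — is the step that requires genuine care; the duality and $\Omega$ steps are formal once the ambient maps are recognized as (inclusion-reversing, resp. inclusion-preserving) isomorphisms.
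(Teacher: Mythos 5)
Your proposal is correct and takes essentially the same route as the paper: pass to the lift $\widetilde U \supseteq K$ via the correspondence theorem for $M \cong R^r/K$, and observe that the single condition $K \subseteq \widetilde U$ translates into $\VV \supseteq \VV'$, $\WW \subseteq \WW'$ and $\mathcal{E} \subseteq \mathcal{E}'$ under the bijections of \autoref{thm_main}, with the ``moreover'' clause inherited directly from that theorem. The monotonicity worry you raise at the (a) $\leftrightarrow$ (b) step is harmless, since $\gamma^{-1}$ in \autoref{thm:param_primary} is simply a preimage (hence monotone on all subsets), so $\gamma(K) \subseteq \VV' \subseteq \VV$ immediately yields $K \subseteq \gamma^{-1}(\VV) = \widetilde U$.
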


\subsection{A basic recap on differential operators}
In this subsection, we recall basic properties of differential operators to be used in the proof of the main theorem (for further details, the reader is referred to \cite[\S 16]{EGAIV_IV}).
For $R$-modules $M$ and $N$, we regard $\Hom_\kk(M, N)$ as an $(R\otimes_\kk R)$-module, by setting 
$$
\left((s \otimes_\kk t) \delta \right)(w) = s \delta(tw) \quad \text{ for all } \delta \in \Hom_\kk(M, N), \; w \in M,\; s,t \in R. 
$$ 
We use the bracket notation $[\delta,s](w) := \delta(sw)-s\delta(w)$ for $\delta \in \Hom_\kk(M, N)$, $s \in R$ and $w \in M$.

Unless otherwise specified, whenever we consider an $(R \otimes_\kk R)$-module as an $R$-module, we do so by letting $R$ act via the left factor of $R \otimes_\kk R$. 

\begin{definition}
	\label{def_diff_ops}
	Let $M$ and $N$ be $R$-modules.
	The \textit{$m$-th order $\kk$-linear differential operators}, denoted
	$\Diff_{R/\kk}^m(M, N) \subseteq \Hom_\kk(M, N)$,
	form an $(R\otimes_\kk R)$-module that is defined inductively~by
	\begin{enumerate}[\rm (i)]
		\item $\Diff_{R/\kk}^{0}(M,N) := \Hom_R(M,N)$.
		\item $\Diff_{R/\kk}^{m}(M, N) := 
		\big\lbrace \delta \in \Hom_\kk(M,N) \,\mid\, [\delta, s] \in \Diff_{R/\kk}^{m-1}(M, N) 
		\text{ for all } s \in R \big\rbrace$.
	\end{enumerate}
	The set of all \textit{$\kk$-linear differential operators from $M$ to $N$} 
	is the $(R \otimes_\kk R)$-module
	$$
	\Diff_{R/\kk}(M, N) := \bigcup_{m=0}^\infty \Diff_{R/\kk}^m(M,N).
	$$
	Following the notation used in \cite{NOETH_OPS, PRIM_IDEALS_DIFF_EQS, DIFF_PRIM_DEC}, subsets $\mathcal{E} \subseteq \Diff_{R/\kk}(M, N)$ are viewed as systems of differential equations, and their solution spaces over $\kk$ are defined as
	\begin{equation*}
		{\rm Sol}(\mathcal{E}) := \big\lbrace w \in M \,\mid \, \delta(w) = 0 \text{ for all } \delta \in \mathcal{E} \big\rbrace 
		= \bigcap_{\delta \in \mathcal{E} } {\rm Ker}(\delta).
	\end{equation*}
\end{definition}

\begin{example}
As $R = \kk[x_1,\ldots,x_n]$ is a polynomial ring over a field $\kk$ of characteristic zero, $\Diff_{R/\kk}(R,R)$ is the Weyl algebra $D_n = R\langle \partial_{x_1},\ldots, \partial_{x_n} \rangle = \bigoplus_{\alpha \in \NN^n} R \partial_\xx^\alpha$.
\end{example}

We next describe differential operators via the module of principal parts.
Consider the multiplication map $\mu : R \otimes_\kk R \to R$, $s \otimes_\kk t \mapsto st$, and define $\Delta_{R/\kk} := \Ker(\mu)$, which is an ideal in $R \otimes_\kk R$.
One can alternatively define differential operators as follows:

\begin{proposition}[{\cite[Proposition 2.2.3]{AFFINE_HOPF_I}}] \label{prop_Diff_ann_diag}
Let $M, N$ be $R$-modules.
Then $\DiffR^m(M, N)$ is the $(R \otimes_\kk R)$-submodule of $\Hom_\kk(M, N)$ annihilated by $\Diag^{m+1}$.
\end{proposition}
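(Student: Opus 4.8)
The plan is to establish the set equality $\DiffR^m(M,N) = \{\delta\in\Hom_\kk(M,N) : \Diag^{m+1}\delta = 0\}$ by induction on $m\ge 0$, the engine being a dictionary between the bracket $[\delta,s]$ and the action on $\delta$ of the distinguished elements $g_s := 1\otimes_\kk s - s\otimes_\kk 1 \in \Diag$ (for $s\in R$).

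First I would record two elementary facts. (i) The ideal $\Diag = \Ker(\mu)\subseteq R\otimes_\kk R$ is generated by the elements $g_s$, $s\in R$: if $\sum_i s_i\otimes_\kk t_i \in \Diag$ then $\sum_i s_it_i = 0$, whence $\sum_i s_i\otimes_\kk t_i = \sum_i(s_i\otimes_\kk 1)\,g_{t_i}$. (ii) Directly from the $(R\otimes_\kk R)$-module structure on $\Hom_\kk(M,N)$, one has $(g_s\delta)(w) = \delta(sw) - s\delta(w) = [\delta,s](w)$, so acting by $g_s$ is precisely the operation $\delta\mapsto[\delta,s]$. Since $R\otimes_\kk R$ is commutative, (i) upgrades to: $\Diag^{m+1} = \Diag^m\cdot\Diag$ is the ideal generated by the products $h\,g_s$ with $h\in\Diag^m$ and $s\in R$. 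Consequently, for any $\delta\in\Hom_\kk(M,N)$, one has $\Diag^{m+1}\delta = 0$ if and only if $(h\,g_s)\delta = h\,(g_s\delta) = 0$ for all $h\in\Diag^m$ and $s\in R$, i.e. if and only if $\Diag^m[\delta,s] = 0$ for every $s\in R$.

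With this equivalence in hand the induction is immediate. Base case $m=0$: $\Diag\,\delta = 0 \iff [\delta,s] = 0$ for all $s\in R \iff \delta\in\Hom_R(M,N) = \DiffR^0(M,N)$. Inductive step: assuming the claim for $m-1$ (for which the relevant ideal is $\Diag^{(m-1)+1} = \Diag^m$), the equivalence above gives $\Diag^{m+1}\delta = 0$ iff $\Diag^m[\delta,s] = 0$ for all $s$ iff $[\delta,s]\in\DiffR^{m-1}(M,N)$ for all $s\in R$, which is exactly the defining condition \autoref{def_diff_ops}(ii) for $\delta\in\DiffR^m(M,N)$. Finally, since $\{\delta : \Diag^{m+1}\delta = 0\}$ is visibly an $(R\otimes_\kk R)$-submodule of $\Hom_\kk(M,N)$ ($\Diag^{m+1}$ being an ideal of a commutative ring), this identification also recovers the assertion, implicit in \autoref{def_diff_ops}, that $\DiffR^m(M,N)$ is an $(R\otimes_\kk R)$-submodule.

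I do not anticipate a genuine obstacle here: the result is a clean induction. The step most prone to slips is the passage recorded at the end of the second paragraph — identifying the generators of $\Diag^{m+1}$ with the products $h\,g_s$, re-bracketing $(h\,g_s)\delta = h\,(g_s\delta)$ via associativity of the module action, and using that a $\delta$ killed by a generating set of an ideal is killed by the whole ideal. One may also streamline the exposition by launching the induction at $m = -1$, with the conventions $\DiffR^{-1} = 0$ and $\Diag^0 = R\otimes_\kk R$, which absorbs the base case into the inductive step.
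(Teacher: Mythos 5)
Your proof is correct. The paper itself gives no argument here --- it quotes the result from Heyneman--Sweedler \cite[Proposition 2.2.3]{AFFINE_HOPF_I} --- and your induction is precisely the standard proof behind that citation: the dictionary $g_s\cdot\delta=[\delta,s]$ for $g_s=1\otimes_\kk s-s\otimes_\kk 1$, the fact that the $g_s$ generate $\Diag=\Ker(\mu)$, and hence that $\Diag^{m+1}\delta=0$ iff $\Diag^m[\delta,s]=0$ for all $s\in R$, which feeds the inductive definition \autoref{def_diff_ops}(ii) with base case $\Diag\,\delta=0\iff\delta\in\Hom_R(M,N)$. All the steps you flag as slip-prone (generation of $\Diag^{m+1}$ by the products $h\,g_s$, re-bracketing via the commutative module action, passing from generators to the whole ideal) check out, so nothing further is needed.
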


\begin{definition}
	Let $M$ be an $R$-module.
	The {\em module of $m$-th principal parts} of $M$ is defined as	
	$$
	P_{R/\kk}^m(M) := \frac{R \otimes_\kk M}{	\Delta_{R/\kk}^{m+1}  \left(R \otimes_\kk M\right)}.
	$$
	This is a module over $R \otimes_\kk R$ and thus also over $R$.
	For simplicity, set $ P_{R/\kk}^m := P_{R/\kk}^m(R)$.
\end{definition}
For any $R$-module $M$, consider the universal map 
$d^m : M \to P_{R/\kk}^m(M)$, $w  \mapsto \overline{1 \otimes_\kk w} $.
The next result is a fundamental characterization of differential operators.

\begin{proposition}[{\cite[Proposition 16.8.4]{EGAIV_IV}, \cite[Theorem 2.2.6]{AFFINE_HOPF_I}}]
	\label{prop_represen_diff_opp}
	Let $M$ and $N$ be $R$-modules and let $m\ge 0$.
	Then the following map is an isomorphism of $R$-modules:
	\begin{align*}
		{\left(d^m\right)}^* : \Hom_R\left(P_{R/\kk}^m(M), N\right) 
		& \xrightarrow{\cong} \Diff_{R/\kk}^m(M, N), \\
		\varphi  \quad & \mapsto \quad \varphi \circ d^m.
	\end{align*}
\end{proposition}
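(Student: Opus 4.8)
The plan is to verify that the assignment $\varphi \mapsto \varphi \circ d^m$ is well-defined, $R$-linear, and bijective, with the bijectivity obtained by writing down an explicit inverse. First I would check well-definedness: given $\varphi \in \Hom_R(P_{R/\kk}^m(M), N)$, the composite $\varphi \circ d^m$ is certainly $\kk$-linear as a map $M \to N$, so the only issue is that it lies in $\Diff_{R/\kk}^m(M,N)$, i.e. (by \autoref{prop_Diff_ann_diag}) that it is annihilated by $\Diag^{m+1}$ inside $\Hom_\kk(M,N)$. This follows directly from the definition $P_{R/\kk}^m(M) = (R \otimes_\kk M)/\Diag^{m+1}(R\otimes_\kk M)$: the universal map $d^m$ factors through the quotient by $\Diag^{m+1}(R\otimes_\kk M)$, and one unwinds how the $(R\otimes_\kk R)$-action on $\Hom_\kk(M,N)$ interacts with composition by $d^m$ — namely, $(s\otimes_\kk t)(\varphi\circ d^m)$ sends $w$ to $s\varphi(d^m(tw)) = \varphi\big((s\otimes_\kk t)\cdot d^m(w)\big)$ using that $\varphi$ is left-$R$-linear and that $d^m$ is compatible with the right-factor action. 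Hence an element of $\Diag^{m+1}$ kills $\varphi \circ d^m$ because it already kills $d^m(w)$ in $P_{R/\kk}^m(M)$. The $R$-linearity of $(d^m)^*$ (for the left-factor $R$-module structures) is then immediate.

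Next I would construct the inverse. Given $\delta \in \Diff_{R/\kk}^m(M,N)$, the map $R\otimes_\kk M \to N$, $s \otimes_\kk w \mapsto s\,\delta(w)$, is $R$-linear (for the left-factor structure on the source); I claim it kills $\Diag^{m+1}(R \otimes_\kk M)$ and therefore descends to a map $\overline{\delta} : P_{R/\kk}^m(M) \to N$. This is the heart of the matter and is where the inductive definition of $\Diff_{R/\kk}^m$ (via iterated brackets $[\delta,s]$) must be matched against the fact that $\Diag$ is generated by the elements $s\otimes_\kk 1 - 1 \otimes_\kk s$; a product of $m+1$ such generators, applied to $1\otimes_\kk w$ and then pushed through $s\otimes_\kk w \mapsto s\delta(w)$, produces exactly an $(m+1)$-fold iterated bracket $[[\cdots[\delta,s_1],s_2]\cdots,s_{m+1}](w)$, which vanishes since $\delta$ has order $\le m$. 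One then checks $\overline{\delta} \circ d^m = \delta$ and, conversely, that $\overline{\varphi \circ d^m} = \varphi$ using that $P_{R/\kk}^m(M)$ is generated as an $R$-module (left factor) by the image of $d^m$ — because every $s\otimes_\kk w$ equals $s\cdot(1\otimes_\kk w) = s\cdot d^m(w)$ modulo nothing at all, so the image of $d^m$ generates even $R\otimes_\kk M$ over the left factor. These two identities show the two constructions are mutually inverse.

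The main obstacle is the bookkeeping in the previous paragraph: correctly identifying the image in $N$ of an element of the form $\prod_{j}(s_j\otimes_\kk 1 - 1\otimes_\kk s_j)\cdot(1\otimes_\kk w)$ under the descended map, and recognizing it as an iterated commutator. This is a purely formal but slightly delicate induction on $m$, parallel to the inductive clause (ii) of \autoref{def_diff_ops}; it is exactly the computation carried out in the cited sources \cite[Proposition 16.8.4]{EGAIV_IV} and \cite[Theorem 2.2.6]{AFFINE_HOPF_I}, so I would either reproduce that induction or simply cite it. Everything else — well-definedness, $R$-linearity, and the fact that $d^m$ has dense image — is routine once the descent statement is in hand.
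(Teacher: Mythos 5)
Your proposal is correct: the paper itself gives no proof of \autoref{prop_represen_diff_opp}, simply citing \cite[Proposition 16.8.4]{EGAIV_IV} and \cite[Theorem 2.2.6]{AFFINE_HOPF_I}, and your argument (well-definedness via \autoref{prop_Diff_ann_diag}, descent of $s\otimes_\kk w\mapsto s\delta(w)$ through $\Diag^{m+1}(R\otimes_\kk M)$ by the iterated-bracket computation, and mutual inverseness using that the image of $d^m$ generates $P_{R/\kk}^m(M)$ over the left $R$-factor) is exactly the standard proof carried out in those references. The only cosmetic slips are the phrase ``dense image,'' which should read ``generates $P_{R/\kk}^m(M)$ as an $R$-module,'' and that one should remark that a general element of $\Diag^{m+1}(R\otimes_\kk M)$ reduces, by commutativity of $R\otimes_\kk R$ and left-$R$-linearity, to the elements $\prod_j(s_j\otimes_\kk 1-1\otimes_\kk s_j)\cdot(1\otimes_\kk w)$ you treat.
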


We recall an explicit description of differential operators on free modules.
Let $J \subseteq R$ be an ideal and consider the canonical map $\pi : R \twoheadrightarrow R/J$.
We wish to describe $\DiffR^m(F,R/J)$ for a free $R$-module $F = R^r$.
Since  $\DiffR^m(F,R/J) \cong {\DiffR^m(R, R/J)}^r$ (see e.g. \cite[Lemma 2.7]{NOETH_OPS}), it is enough to describe $\DiffR^m(R, R/J)$.
We have an induced map:
\begin{equation*}
	\Diff_{R/\KK}^m(\pi) : \Diff_{R/\KK}^m(R, R) \to 
	\Diff_{R/\KK}^m(R, R/J), \quad \delta \mapsto \overline{\delta} = \pi \circ \delta.
\end{equation*}

\begin{lemma}[{\cite[Lemma 2]{PRIM_IDEALS_DIFF_EQS}}]
	\label{lem_descript_diff_op}
	With the above notation, the following statements hold:
		\begin{enumerate}[(i)]
		\item  
		$\Diff_{R/\kk}^m(R,R/J) = \bigoplus_{\lvert \alpha \rvert \le m} (R/J) \overline{\partial_\mathbf{x}^\alpha}$ 
		\, where\,
		$\overline{\partial_\mathbf{x}^\alpha} = \pi \circ \partial_\mathbf{x}^\alpha.$
		\item $\Diff_{R/\kk}^m(\pi)$ is surjective:
		a differential operator $\epsilon = \sum_{\lvert \alpha \rvert \le m} \overline{s_\alpha}
		\overline{\partial_\mathbf{x}^\alpha} \in \Diff_{R/\KK}^m(R,R/J)$ with $s_\alpha \in R$  
		can be lifted to
		$\delta=\sum_{\lvert \alpha \rvert \le m} s_\alpha \partial_\mathbf{x}^\alpha \in \Diff_{R/\kk}^m(R,R)$.
	\end{enumerate}
\end{lemma}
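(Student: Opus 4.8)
The plan is to read both statements off the structure of the module of principal parts. By \autoref{prop_represen_diff_opp} the map $\varphi \mapsto \varphi \circ d^m$ identifies $\Hom_R\!\big(P_{R/\kk}^m, N\big)$ with $\Diff_{R/\kk}^m(R, N)$ simultaneously for $N = R$ and $N = R/J$, compatibly with post-composition by $\pi$, so the first step is to describe $P_{R/\kk}^m$ and $d^m$ explicitly. Identify $R \otimes_\kk R$ with the polynomial ring $R[t_1,\ldots,t_n]$ by sending $x_i \otimes_\kk 1 \mapsto x_i$ and $1 \otimes_\kk x_i \mapsto x_i + t_i$; then $\mu$ becomes evaluation at $t_1 = \cdots = t_n = 0$, so $\Diag = (t_1,\ldots,t_n)$ and
$$
P_{R/\kk}^m \;=\; R[t_1,\ldots,t_n]\big/(t_1,\ldots,t_n)^{m+1},
$$
which is a free $R$-module (over the coefficient ring, i.e. the left factor) with basis the monomials $\{\, t^\alpha : |\alpha| \le m \,\}$. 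Moreover $d^m(f) = \overline{1 \otimes_\kk f}$ is the class of $f(\mathbf{x} + \mathbf{t})$, and expanding by Taylor's formula (valid here since $\kk$ has characteristic zero) gives $d^m(f) = \sum_{|\alpha| \le m} \tfrac{1}{\alpha!}\,(\partial_\mathbf{x}^\alpha f)\, t^\alpha$ modulo $(t_1,\ldots,t_n)^{m+1}$.

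Granting this, part (i) is immediate. Since $P_{R/\kk}^m$ is $R$-free on $\{t^\alpha : |\alpha| \le m\}$, the $R/J$-module $\Hom_R\!\big(P_{R/\kk}^m, R/J\big)$ is free on the coordinate functionals $(t^\alpha)^\vee$ (extract the $t^\alpha$-coefficient, then reduce mod $J$), and under the isomorphism of \autoref{prop_represen_diff_opp} one has $(t^\alpha)^\vee \circ d^m = \tfrac{1}{\alpha!}\,\pi \circ \partial_\mathbf{x}^\alpha = \tfrac{1}{\alpha!}\,\overline{\partial_\mathbf{x}^\alpha}$ by the formula for $d^m$. As $\tfrac{1}{\alpha!}$ is a unit, $\{\,\overline{\partial_\mathbf{x}^\alpha} : |\alpha| \le m\,\}$ is again an $R/J$-basis of $\Diff_{R/\kk}^m(R, R/J)$, which is exactly the assertion $\Diff_{R/\kk}^m(R, R/J) = \bigoplus_{|\alpha| \le m}(R/J)\,\overline{\partial_\mathbf{x}^\alpha}$. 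Part (ii) then follows formally: writing $\epsilon \in \Diff_{R/\kk}^m(R, R/J)$ as $\sum_{|\alpha| \le m}\overline{s_\alpha}\,\overline{\partial_\mathbf{x}^\alpha}$ with $s_\alpha \in R$ (possible by (i)) and setting $\delta := \sum_{|\alpha|\le m} s_\alpha\,\partial_\mathbf{x}^\alpha \in \Diff_{R/\kk}^m(R, R)$, one computes $\Diff_{R/\kk}^m(\pi)(\delta)(w) = \pi\big(\sum_\alpha s_\alpha\,\partial_\mathbf{x}^\alpha(w)\big) = \sum_\alpha \overline{s_\alpha}\,\overline{\partial_\mathbf{x}^\alpha}(w) = \epsilon(w)$, so $\Diff_{R/\kk}^m(\pi)$ is surjective and $\delta$ is the advertised lift.

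There is no serious obstacle here; the only nontrivial ingredient is the concrete description of $P_{R/\kk}^m$ and $d^m$ — fixing the isomorphism $R \otimes_\kk R \cong R[t_1,\ldots,t_n]$, checking $\Diag = (t_1,\ldots,t_n)$, and the Taylor computation of $d^m$ (this last uses characteristic zero, so that the $\alpha!$ are invertible). If one prefers to avoid principal parts altogether, there is a self-contained induction on $m$ using \autoref{def_diff_ops}: the inclusion $\bigoplus_{|\alpha|\le m}(R/J)\,\overline{\partial_\mathbf{x}^\alpha} \subseteq \Diff_{R/\kk}^m(R, R/J)$ is clear because $\overline{\partial_\mathbf{x}^\alpha} = \pi \circ \partial_\mathbf{x}^\alpha$ with $\pi$ of order $0$; $R/J$-linear independence is obtained by evaluating a putative relation on the monomials $\mathbf{x}^\beta$ and inducting on $|\beta|$; and for the reverse inclusion one uses that $[\delta, x_i] \in \Diff_{R/\kk}^{m-1}(R, R/J)$ for $\delta$ of order $\le m$, invokes the inductive hypothesis together with $[\partial_\mathbf{x}^\alpha, x_i] = \alpha_i\,\partial_\mathbf{x}^{\alpha - e_i}$ and the symmetry $[[\delta, x_i], x_j] = [[\delta, x_j], x_i]$ to build a $\delta' = \sum_{|\alpha| \ge 1}\overline{s_\alpha}\,\overline{\partial_\mathbf{x}^\alpha}$ with $[\delta - \delta', x_i] = 0$ for all $i$, and finally notes that $\delta - \delta'$, commuting with every $x_i$, must be $R$-linear, hence a multiple of $\overline{\partial_\mathbf{x}^0}$. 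The compatibility bookkeeping in the last step makes this route messier, so I would present the principal-parts argument as the main proof.
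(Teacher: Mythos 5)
Your proof is correct, and since the paper itself gives no argument here (it simply cites \cite[Lemma 2]{PRIM_IDEALS_DIFF_EQS}), the right comparison is with that source and with the surrounding machinery the paper sets up: your principal-parts computation --- identifying $R\otimes_\kk R$ with $R[t_1,\ldots,t_n]$, noting $P_{R/\kk}^m$ is $R$-free on the $t^\alpha$ with $|\alpha|\le m$, and reading off $d^m$ via Taylor expansion so that $(t^\alpha)^\vee\circ d^m=\tfrac{1}{\alpha!}\,\overline{\partial_\mathbf{x}^\alpha}$ --- is exactly the standard argument behind that citation and behind the paper's own \autoref{prop_represen_diff_opp} and \autoref{rem_diff_yy}. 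So this is essentially the intended proof, with the lifting statement (ii) following formally from the basis description in (i), as you note.
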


The notation below will be useful for describing the $(R \otimes_\kk R)$-action:

\begin{notation}
	\label{nota_T}
	Let $T := R \otimes_\kk R = \kk[x_1, \ldots, x_n, y_1, \ldots, y_n]$ be a polynomial ring in $2n$ variables, where $x_i$ represents $x_i \otimes_\kk 1$ and $y_i$ represents $1 \otimes_\kk x_i - x_i \otimes_\kk 1$.
	The action of $T$ on $\Hom_\kk(M,N)$ is defined as follows: for all $\delta \in \Hom_\kk(M, N)$ and $w \in M$, 
	$$
	(x_i \cdot \delta) (w) \,:=\, x_i \delta(w) \quad \text{ and } \quad  (y_i \cdot \delta) (w) \,:=\, \delta(x_i w) - x_i\delta(w) = \left[\delta,x_i\right](w)
	$$
	for all $1 \le i \le n$.
\end{notation}

\begin{remark}
	\label{rem_diff_yy}
	Viewing $\Princ^m$ and $\DiffR^m(R,R/J)$ as $T$-modules yields the following useful descriptions:
	\begin{enumerate}[(i)]
		\item $P_{R/\kk}^m \,= \, \bigoplus_{\alpha \in \NN^n, \lvert \alpha \rvert \le m} R y_1^{\alpha_1}\cdots y_n^{\alpha_n}$.
		\item Under the isomorphism $\DiffR^m(R,R/J) \cong \Hom_R(\Princ^m,R/J)$ (cf.~\autoref{prop_represen_diff_opp}), the dual basis element $\left(y_1^{\alpha_1}\cdots y_n^{\alpha_n}\right)^*$ corresponds to the differential operator $\frac{1}{\alpha_1!\cdots \alpha_n!} \overline{\partial_{x_1}^{\alpha_1}\cdots \partial_{x_n}^{\alpha_n}}$.
	\end{enumerate}
\end{remark}
\begin{proof}
	For explicit computations, see \cite[\S 5]{DIFF_PRIM_DEC}.
\end{proof}

The next proposition describes the differential operators of an arbitrary finitely generated module.

\begin{proposition}
	\label{lem_diff_M}
	Let $0 \to K \to F \to M \to 0$ be a short exact sequence of $R$-modules where $F$ is $R$-free of finite rank.
	Let $N$ be an $R$-module.
	Then, for all $m \ge 0$, we have 
	$$
	\Diff_{R/\kk}^m(M, N) \, = \, \big\lbrace \delta \in \Diff_{R/\kk}^m(F, N)  \mid  \delta(K) = 0 \big\rbrace.
	$$
\end{proposition}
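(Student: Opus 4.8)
The plan is to prove both inclusions of the claimed equality directly from the inductive definition of differential operators (\autoref{def_diff_ops}) and the functoriality of $\Hom_\kk(-, N)$. Write $\pi : F \twoheadrightarrow M$ for the surjection in the short exact sequence, so that $M \cong F/K$. The key observation is that composition with $\pi$ gives an injective $\kk$-linear (indeed $(R\otimes_\kk R)$-linear) map $\pi^* : \Hom_\kk(M, N) \hookrightarrow \Hom_\kk(F, N)$ whose image is exactly $\{\delta \in \Hom_\kk(F,N) \mid \delta(K) = 0\}$, simply because a $\kk$-linear map out of $F$ factors through $F/K$ if and only if it kills $K$. So the content of the proposition is that this identification is compatible with the order filtration, i.e. that $\pi^*$ restricts to a bijection
$$
\Diff_{R/\kk}^m(M, N) \;\xrightarrow{\ \cong\ }\; \big\lbrace \delta \in \Diff_{R/\kk}^m(F, N) \mid \delta(K) = 0 \big\rbrace
$$
for every $m \ge 0$.

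First I would prove the inclusion $\subseteq$. Take $\delta \in \Diff_{R/\kk}^m(M,N)$ and regard it, via $\pi^*$, as a $\kk$-linear map $\widetilde{\delta} : F \to N$ killing $K$; I must show $\widetilde{\delta} \in \Diff_{R/\kk}^m(F,N)$. This is an easy induction on $m$: for $m = 0$, $\delta$ is $R$-linear on $M$, hence $\widetilde{\delta}$ is $R$-linear on $F$; for the inductive step, for any $s \in R$ the commutator $[\widetilde{\delta}, s]$ again kills $K$ (since $K$ is a submodule, $sK \subseteq K$) and corresponds under $\pi^*$ to $[\delta, s] \in \Diff_{R/\kk}^{m-1}(M, N)$, which lies in $\Diff_{R/\kk}^{m-1}(F,N)$ by the inductive hypothesis; hence $[\widetilde{\delta}, s] \in \Diff_{R/\kk}^{m-1}(F,N)$ for all $s$, so $\widetilde{\delta} \in \Diff_{R/\kk}^m(F,N)$.

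The reverse inclusion $\supseteq$ is the substantive direction, and it is the step I expect to be the main obstacle, since a priori a map $\widetilde{\delta} : F \to N$ that kills $K$ and is a differential operator of order $m$ on $F$ induces a $\kk$-linear map $\delta : M \to N$, but one must check $\delta \in \Diff_{R/\kk}^m(M,N)$ rather than merely in $\Hom_\kk(M,N)$. The clean way is again by induction on $m$, using exactly the same commutator bookkeeping: for $m=0$, $R$-linearity descends along the surjection $\pi$; for the step, given $s \in R$, the operator $[\widetilde{\delta}, s]$ kills $K$ and is in $\Diff_{R/\kk}^{m-1}(F,N)$, so by induction the induced map on $M$ lies in $\Diff_{R/\kk}^{m-1}(M,N)$, and that induced map is precisely $[\delta, s]$ (compatibility of the commutator with $\pi^*$ in the other direction, which holds because $\pi$ is $R$-linear); therefore $[\delta, s] \in \Diff_{R/\kk}^{m-1}(M,N)$ for all $s \in R$, giving $\delta \in \Diff_{R/\kk}^m(M,N)$. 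Alternatively one can phrase the whole argument using \autoref{prop_Diff_ann_diag}: $\Diff_{R/\kk}^m(-, N)$ is the $\Diag^{m+1}$-torsion part of $\Hom_\kk(-, N)$, and the identification $\Hom_\kk(M,N) = \{\delta \in \Hom_\kk(F,N) \mid \delta(K) = 0\}$ is $(R\otimes_\kk R)$-linear, so it matches up $\Diag^{m+1}$-torsion on both sides; I would include this as the slicker variant once the commutator-compatibility with $\pi^*$ is recorded, since it reduces the proposition to the single observation that an $(R\otimes_\kk R)$-submodule inclusion preserves and reflects annihilation by a fixed ideal.
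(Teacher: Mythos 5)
Your proposal is correct, and in fact it contains the paper's proof as your ``slicker variant'': the paper handles the easy inclusion by left exactness of $\Diff_{R/\kk}^m(\bullet, N)$ (so that $\Diff_{R/\kk}^m(M,N) \hookrightarrow \Diff_{R/\kk}^m(F,N)$, landing in the operators killing $K$), and handles the substantive direction exactly as you sketch at the end, via \autoref{prop_Diff_ann_diag}: if $\delta \in \Diff_{R/\kk}^m(F,N)$ kills $K$, the induced $\overline{\delta} \in \Hom_\kk(M,N)$ satisfies $\Diag^{m+1}\cdot\overline{\delta}=0$ because $\Diag^{m+1}\cdot\delta=0$ and the $(R\otimes_\kk R)$-action is compatible with composition along the surjection $F \twoheadrightarrow M$. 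Your main writeup instead runs an induction on $m$ straight from the commutator definition (\autoref{def_diff_ops}), using that $[\pi^*\delta,s]=\pi^*[\delta,s]$ and that $[\widetilde{\delta},s]$ kills $K$ whenever $\widetilde{\delta}$ does; this is entirely valid and more self-contained (no appeal to the principal-parts characterization), at the cost of a bit more bookkeeping, whereas the paper's route reduces everything to the single observation that annihilation by the fixed ideal $\Diag^{m+1}$ is preserved and reflected under the $(R\otimes_\kk R)$-linear identification of $\Hom_\kk(M,N)$ with $\{\delta \in \Hom_\kk(F,N) \mid \delta(K)=0\}$.
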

\begin{proof}
	By left exactness of $\Diff_{R/\kk}^m(\bullet, N)$, one has $\Diff_{R/\kk}^m(M, N) \hookrightarrow \Diff_{R/\kk}^m(F, N)$ (see e.g. \cite[Lemma 2.7]{NOETH_OPS}).
	The inclusion ``$\subseteq$'' is clear. 
	Conversely, if $\delta \in \Diff_{R/\kk}^m(F,N)$ and $\delta(K) = 0$, then there is a unique map $\overline{\delta} \in \Hom_\kk(M, N)$ induced by $\delta$, and by \autoref{prop_Diff_ann_diag}, $\Diag^{m+1} \cdot \delta = 0$, which implies $\Diag^{m+1} \cdot \overline{\delta} = 0$.
	Then $\overline{\delta} \in \Diff_{R/\kk}^m(M, N)$ by \autoref{prop_Diff_ann_diag} again, and the desired result follows.
\end{proof}

\subsection{Punctual Quot schemes and Macaulay inverse systems}
\label{subsect_Quot_Macaulay}

We are now ready to begin describing the correspondences in our main result.
The purpose of this subsection is to parametrize $\pp$-primary $R$-submodules of a free module $R^r$ via punctual Quot schemes and Macaulay inverse systems.

Let $\BB$ be the power series ring $\BB := \FF[[y_1,\ldots,y_c]]$ over the residue field $\FF= R_\pp/\pp R_\pp$ and denote its maximal ideal by $\nn := (y_1,\ldots,y_c) \subseteq \BB$.
The punctual Quot scheme ${\rm Quot}^m\left(\BB^r\right)$ parametrizes all quotients 
$$
\BB^r \,=\, \FF[[y_1,\ldots,y_c]]^r \,\twoheadrightarrow\, T
$$ 
such that $T$ has finite length $m$, and consequently, $T$ is only supported at the maximal ideal $\nn$.
More precisely, the $\FF$-rational points of ${\rm Quot}^m\left(\BB^r\right)$ correspond to $\BB$-submodules $\VV \subseteq \BB^r$ such that $\dim_\FF(\BB^r/\VV) = m$
(in this paper, \emph{a point in ${\rm Quot}^m\left(\BB^r\right)$} will always mean \emph{an $\FF$-rational point in ${\rm Quot}^m\left(\BB^r\right)$}).
This parameter space has been considered in several papers, e.g. \cite{baranovsky2000moduli}, \cite{ellingsrud1999irreducibility}, \cite{HENNI_GUIMARES}.
When $r = 1$, ${\rm Quot}^m\left(\BB^r\right)$ coincides with the \emph{punctual Hilbert scheme} ${\rm Hilb}^m\left(\FF[[y_1,\ldots,y_c]]\right)$ studied by Briançon \cite{Bri77} and Iarrobino \cite{IARROBINO_HILB}.

The following basic fact shows that any 
point in ${\rm Quot}^m\left(\BB^r\right)$ can be identified with a $(\BB/\nn^m\BB )$-submodule of $\BB^r/\nn^m\BB^r$.

\begin{proposition} \label{rem_contain_power}
For any $\BB$-submodule $\VV \subseteq \BB^r$ with colength $m = \dim_\FF \left( \BB^r/\VV \right)$, one has $\VV \supseteq \nn^m\BB^r$.
\end{proposition}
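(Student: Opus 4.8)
The plan is to forget the ambient free module and argue entirely with the finite-length quotient $M := \BB^r/\VV$. The statement $\VV \supseteq \nn^m\BB^r$ is equivalent to $\nn^m M = 0$, so it suffices to prove that an arbitrary $\BB$-module $M$ of length $m$ is annihilated by $\nn^m$, where $\BB = \FF[[y_1,\ldots,y_c]]$ is a Noetherian local ring with maximal ideal $\nn$.

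First I would set up the $\nn$-adic filtration
$$
M = \nn^0 M \supseteq \nn^1 M \supseteq \nn^2 M \supseteq \cdots .
$$
The key input is Nakayama's lemma: since $M$ is a finitely generated module over the Noetherian local ring $\BB$, each $\nn^i M$ is again finitely generated, so if $\nn^{i+1}M = \nn^i M$ — i.e. $\nn \cdot (\nn^i M) = \nn^i M$ — then $\nn^i M = 0$. Consequently, if $t$ denotes the least integer with $\nn^t M = 0$ (which exists because $M$ has finite length and is therefore Artinian, hence $\nn$-power-torsion), then for every $i$ with $0 \le i \le t-1$ we must have $\nn^{i+1}M \subsetneq \nn^i M$, a \emph{strict} inclusion; otherwise Nakayama would force $\nn^i M = 0$ with $i < t$, contradicting minimality of $t$.

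Now I would count lengths. Each strict inclusion $\nn^{i+1}M \subsetneq \nn^i M$ contributes $\leng_\BB(\nn^i M/\nn^{i+1}M) \ge 1$, so
$$
m = \leng_\BB(M) = \sum_{i=0}^{t-1} \leng_\BB\!\left(\nn^i M/\nn^{i+1}M\right) \ge t .
$$
Hence $t \le m$, and therefore $\nn^m M \subseteq \nn^t M = 0$. Translating back, $\nn^m\BB^r \subseteq \VV$, as claimed.

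I do not expect any genuine obstacle here: the argument is a standard application of Nakayama together with additivity of length on the associated graded pieces of the $\nn$-adic filtration. The only points deserving a line of care are that $\BB$ is Noetherian local (so Nakayama applies to each $\nn^i M$) and that a finite-length module over a local ring is annihilated by some power of the maximal ideal (so that the integer $t$ is well defined); both are immediate from $\dim_\FF(\BB^r/\VV) = m < \infty$.
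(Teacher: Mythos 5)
Your proof is correct and is essentially the paper's argument in different clothing: the paper forms the associated graded module of $\BB^r/\VV$ along the $\nn$-adic filtration and notes its graded pieces $\nn^k(\BB^r/\VV)/\nn^{k+1}(\BB^r/\VV)$ stay nonzero until they vanish, which is exactly your Nakayama step, and then compares total $\FF$-dimension $m$ with the number of nonzero pieces just as you compare $m$ with $t$. So this is the same length-count along the $\nn$-adic filtration, with your version merely making the Nakayama input explicit and arguing directly rather than by contradiction.
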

\begin{proof}
Consider the associated graded module 
\[
\gr(\BB^r/\VV) \;:=\; \bigoplus_{k = 0}^\infty \nn^k\BB^r / \left(\nn^{k+1}\BB^r +  \left( \nn^k\BB^r \cap \VV \right) \right)
\]
which satisfies $\dim_\FF \left( \gr(\BB^r/\VV) \right) = \dim_\FF \left( \BB^r/\VV \right)$.
If $\VV \not\supseteq \nn^m\BB^r$, then $\left[ \gr(\BB^r/\VV) \right]_k \neq 0$ for all $0 \le k \le m$, so $m = \dim_\FF\left(\BB^r/\VV\right) = \dim_\FF\left(\gr(\BB^r/\VV)\right) \ge m+1$, a contradiction.
\end{proof}

Following the approach of \cite{PRIM_IDEALS_DIFF_EQS}, we have an injection
\begin{equation*}
	\eta : R \hookrightarrow \BB\, , \qquad
	\begin{matrix}
		x_i  &\mapsto &  y_i+ \overline{x_i}, & \!\!\!\!\! \mbox{ for }1\leq i\leq c,\\
		x_j & \mapsto  & \overline{x_j},& \quad \,\mbox{ for }c+1\leq j\leq n,
	\end{matrix}
\end{equation*}
where $\overline{x_i}$ denotes the class of $x_i$ in $\FF$ for $1\leq i\leq n$.
We now define the induced injection
\begin{equation}
	\label{eq_map_gamma}
	\gamma : R^r \hookrightarrow \BB^r, \quad (f_1,\ldots,f_r) \in R^r \mapsto (\eta(f_1),\ldots,\eta(f_r)) \in \BB^r.
\end{equation}

The map $\gamma$ provides the first correspondence \hyperref[part_a]{(a)} $\leftrightarrow$ \hyperref[part_b]{(b)} in our main theorem.

\begin{theorem}
	\label{thm:param_primary}
	With the above notation, there is a bijection
	$$
	\begin{array}{ccc}
		\left\lbrace\begin{array}{c}
			\mbox{$\pp$-primary $R$-submodules of $R^r$}\\
			\mbox{of multiplicity $m$ over $\pp$}
		\end{array}\right\rbrace
		
		& \longleftrightarrow &
		
		\left\lbrace\begin{array}{c}
			\mbox{points in }{\rm Quot}^m(\BB^r)\\
		\end{array}\right\rbrace\\
		
		U & \longrightarrow & \VV = \BB \cdot \gamma(U) + \nn^m \BB^r \\
		U=\gamma^{-1}(\VV) & \longleftarrow & \VV.
	\end{array}
	$$
\end{theorem}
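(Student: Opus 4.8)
The plan is to verify that $\gamma$ and the assignment $U \mapsto \VV = \gamma(U)+\nn^m\BB^r$ are well-defined in both directions, and that they are mutually inverse. First I would examine the ring map $\eta : R \hookrightarrow \BB$: since $x_i \mapsto y_i + \overline{x_i}$ for $1 \le i \le c$ and $x_j \mapsto \overline{x_j}$ for $c+1 \le j \le n$, the image generates a subring of $\BB$ over which $\BB$ becomes (after completion) a faithfully flat extension; more precisely, $\eta$ factors as $R \to S \to \BB$ where $S = \LL[x_1,\ldots,x_c]$ is the localization from \autoref{setup_1} and $S \to \BB$ is the completion of $S$ at the maximal ideal $(x_1-\overline{x_1},\ldots,x_c-\overline{x_c})$ — note $\pp S$ is exactly this maximal ideal, since $\{x_{c+1},\ldots,x_n\}$ is a basis modulo $\pp$. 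Thus $\BB = \widehat{S_{\pp S}}$, and $\gamma : R^r \hookrightarrow \BB^r$ is the composite $R^r \to S^r \to \BB^r$. Because $R \to S$ is a localization inverting elements outside $\pp$, and localization and completion at a maximal ideal are both flat with the relevant faithful-flatness properties, for any submodule $U \subseteq R^r$ the module generated by $\gamma(U)$ in $\BB^r$ is $\widehat{(U_\pp)}$, the completion of $U_\pp$ inside $\BB^r = \widehat{(R^r_\pp)}$.

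The key computation is then: if $U$ is $\pp$-primary of multiplicity $m = \leng_{R_\pp}(R^r_\pp/U_\pp)$, the quotient $R^r_\pp/U_\pp$ is a finite-length $R_\pp$-module, hence $\pp R_\pp$-adically complete already, so $\widehat{R^r_\pp}/\widehat{U_\pp} \cong R^r_\pp/U_\pp$ has length $m$ over $\FF = R_\pp/\pp R_\pp = \BB/\nn$. Setting $\VV := \widehat{U_\pp} = \langle \gamma(U)\rangle_\BB$, we get $\dim_\FF(\BB^r/\VV) = m$, i.e. $\VV$ is a point of ${\rm Quot}^m(\BB^r)$; and by \autoref{rem_contain_power}, $\VV \supseteq \nn^m\BB^r$, so $\VV = \gamma(U) + \nn^m\BB^r$ as claimed (adding $\nn^m\BB^r$ is harmless but makes the formula literal without needing $\BB$-span). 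Conversely, given a point $\VV \subseteq \BB^r$ of colength $m$, I would set $U := \gamma^{-1}(\VV) = \{w \in R^r : \gamma(w) \in \VV\}$. Then $R^r/U \hookrightarrow \BB^r/\VV$, so $R^r/U$ is a finite-length $R$-module supported at $\pp$ (the contraction of $\nn$): it is either zero or $\pp$-primary. To see the multiplicity is exactly $m$ and that the map back recovers $\VV$, I would use faithful flatness of $R_\pp \to \BB$: tensoring $0 \to U \to R^r \to R^r/U \to 0$ with $\BB$ over $R$ (equivalently over $R_\pp$, after localizing, which changes nothing since $R^r/U$ is already $\pp$-local) gives $0 \to U \otimes_R \BB \to \BB^r \to (R^r/U)\otimes_R\BB \to 0$, and one checks $U \otimes_R \BB \to \BB^r$ has image $\VV$ — here faithful flatness is what forces the image to be all of $\VV$ and not a proper submodule, and it forces $\leng_{R_\pp}(R^r/U) = \dim_\FF(\BB^r/\VV) = m$.

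Finally I would assemble the two directions into mutually inverse bijections: starting from $U$, forming $\VV = \langle\gamma(U)\rangle_\BB$, and contracting back, we recover $\gamma^{-1}(\VV) = U$ precisely because $R_\pp \to \BB$ is faithfully flat, so $\VV \cap \gamma(R^r) = \gamma(U)$ (this is the standard fact $N \otimes_R \BB \cap M = N$ for faithfully flat extensions applied to $U_\pp \subseteq R^r_\pp$). Starting from $\VV$ and re-expanding $\gamma^{-1}(\VV)$ we recover $\VV$ by the surjectivity established via faithful flatness in the previous paragraph. The main obstacle I anticipate is bookkeeping the identification $\BB \cong \widehat{S_{\pp S}}$ carefully — in particular checking that $\pp$ really becomes the full maximal ideal of $S$ under the basis assumption, and that the completion does not change lengths of $\pp$-primary quotients — together with being precise about which flatness statement ("image of $U\otimes_R\BB$ is $\VV$", "$\VV \cap \gamma(R^r) = \gamma(U)$") is invoked at each step; none of these is deep, but the argument only works if the hypothesis that $U$ is $\pp$-\emph{primary} (equivalently $R^r/U$ has finite length at $\pp$) is used to guarantee completeness and hence the colength equality.
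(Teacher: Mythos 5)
Your overall skeleton is the same as the paper's: pass through the localization $S$ (equivalently $R_\pp$), identify $\BB$ with the completion of $S$ at $\pp S$, and use \autoref{rem_contain_power} so that both sides of the correspondence live at the finite level, with the paper carrying this out via the isomorphisms $S/\pp^m S \xrightarrow{\cong} \BB/\nn^m\BB$ cited from \cite{PRIM_IDEALS_DIFF_EQS}*{Proposition 1} while you phrase it via $\BB \cong \widehat{S_{\pp S}}$ and faithful flatness. The genuine gap is precisely at that identification, which you dismiss as bookkeeping and justify incorrectly: you claim that $\pp S$ is ``exactly the maximal ideal $(x_1-\overline{x_1},\ldots,x_c-\overline{x_c})$,'' but the elements $\overline{x_i}$ live in $\FF$, not in $\LL$, and $\FF$ is in general a proper (finite) algebraic extension of $\LL$ (e.g.\ $\pp=(x_1^2-x_2)\subseteq\Q[x_1,x_2]$ with basis $\{x_2\}$, where $\overline{x_1}=\sqrt{\overline{x_2}}\notin\LL$). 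So $\pp S$ is a maximal ideal of $S$ with residue field $\FF$, not an $\LL$-rational point, and the assertion that $\eta$ realizes $\BB=\FF[[y_1,\ldots,y_c]]$ as $\widehat{S_{\pp S}}$ is exactly the nontrivial content of \cite{PRIM_IDEALS_DIFF_EQS}*{Proposition 1}: one must check that $\eta(\pp)$ generates $\nn$ modulo $\nn^2$ and that the induced maps $S/\pp^mS\to\BB/\nn^m\BB$ are isomorphisms, which uses characteristic zero and the algebraicity of $\FF/\LL$ coming from the basis hypothesis. This is not harmless bookkeeping; the paper itself points out in \autoref{sect_diff_algos} that the analogous statement with all $n$ variables fails because $\FF/\kk(x_{c+1},\ldots,x_n)$ must be algebraic. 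Without this input your faithful-flatness argument has no map $R_\pp\to\BB$ to act on, so as written the proof is incomplete at its crux (though the gap is fillable by citing or reproving that proposition).

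Two smaller inaccuracies are worth flagging, since the rest of your argument (finite-length quotients are already complete, so colength is preserved; a colength-$m$ point contains $\nn^m\BB^r$, so contraction and extension are mutually inverse and multiplicities match) is correct and parallels the paper. First, from $R^r/U\hookrightarrow\BB^r/\VV$ you conclude that $R^r/U$ is a finite-length $R$-module; this is false unless $\pp$ is maximal (already for $\VV=\nn$, $r=1$, the quotient is $\FF$, which is not even finitely generated over $R$). The correct deduction is that every associated prime of $R^r/U$ equals $\pp$ (any $R$-annihilator of a nonzero element contains $\pp^m$ and avoids elements outside $\pp$ since those become units in $\BB$), so $U$ is $\pp$-primary, and only the localization $R^r_\pp/U_\pp$ has finite length, as you correctly say at the very end. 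Second, your parenthetical that adding $\nn^m\BB^r$ makes $\VV=\gamma(U)+\nn^m\BB^r$ literal ``without needing $\BB$-span'' is not right: the set-theoretic image $\gamma(U)$ only has coefficients in $\eta(R)$, not in $\FF$, so in general one must take the $\BB$-submodule generated by $\gamma(U)$ (as the paper's notation $\gamma(\widetilde{U})\BB^r+\nn^m\BB^r$ in \autoref{cor_main} makes explicit); since you define $\VV$ as $\langle\gamma(U)\rangle_\BB$ anyway, this does not damage your argument, but the aside is wrong.
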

\begin{proof}
	The canonical map $R^r \hookrightarrow S^r$, $U \subseteq R^r \mapsto US^r \subseteq S^r$ gives a bijection between $\pp$-primary $R$-submodules and $\pp S$-primary $S$-submodules (see e.g. \cite[Theorem 4.1]{MATSUMURA}).  	
	By \cite[Proposition 1]{PRIM_IDEALS_DIFF_EQS}, for any $m \ge 0$ the map $\eta : R \hookrightarrow \BB$ induces an isomorphism of local rings $S/\pp^mS \xrightarrow{\cong} \BB/\nn^m \BB$.
	Accordingly, we obtain a commutative diagram 
	\begin{center}
		\begin{tikzpicture}[baseline=(current  bounding  box.center)]
			\matrix (m) [matrix of math nodes,row sep=2.5em,column sep=9em,minimum width=1.7em, text height=1.ex, text depth=0.25ex]
			{
				R^r &\\
				S^r &  \BB^r \\
				S^r/\pp^{m}S^r &  \BB^r/\nn^{m}\BB^r. \\
			};
			\path[-stealth]
			(m-3-1) edge node [above]	{$\cong$} (m-3-2)
			;		
			\path [draw,->>] (m-2-1) -- (m-3-1);
			\path [draw,->>] (m-2-2) -- (m-3-2);
			\draw[right hook->] (m-1-1)--(m-2-1);
			\draw[right hook->] (m-2-1)--(m-2-2);
			\draw[right hook->] (m-1-1)--(m-2-2) node [midway,above] {$\gamma$};
		\end{tikzpicture}	
	\end{center}
	By \autoref{rem_contain_power}, any $\BB$-submodule $\VV \subseteq \BB^r$ with $\dim_\FF(\BB^r/\VV) = m$ contains $\nn^m\BB^r$.
	Similarly, any $\pp S$-primary $S$-submodule $V \subseteq S^r$ with $\text{length}_S(S^r/V) = m$ contains $\pp^mS$.
	Therefore, the result follows because under the above identifications the constancy of multiplicity equal to $m$ will not change.
\end{proof}

We next recall the well-known Macaulay inverse systems for modules. 
Consider the injective hull $E := E_{\BB}(\FF)$ of the residue field $\FF \cong \BB/\nn$ of $\BB$.
Since $\BB$ is a formal power series ring, this can be identified with the set of inverse polynomials:
\begin{equation*}
	E \cong \FF[y_1^{-1},\ldots,y_c^{-1}]
\end{equation*}
(for more details see e.g. \cite[Lemma 11.2.3, Example 13.5.3]{Brodmann_Sharp_local_cohom}
or \cite[Theorem 3.5.8]{BRUNS_HERZOG}).
Let $\EE$ be the polynomial ring $\EE := \FF[z_1,\ldots,z_c]$, regarded as a $\BB$-module by letting $y_i$ act as $\partial_{z_i}$, i.e. $y_i \cdot F := \frac{\partial F}{\partial_{z_i}}$ for any $F \in \EE$.
Since $\FF$ has characteristic zero, there is an isomorphism of $\BB$-modules
\begin{equation*}
	E \cong \FF[y_1^{-1},\ldots,y_c^{-1}] \xrightarrow{\cong} \FF[z_1,\ldots,z_c] = \EE, \qquad \frac{1}{\mathbf{y}^{\alpha}} = \frac{1}{y_1^{\alpha_1}\cdots y_c^{\alpha_c}}  \mapsto  \frac{\mathbf{z}^\alpha}{\alpha!} = \frac{z_1^{\alpha_1}\cdots z_c^{\alpha_c}}{\alpha_1!\cdots \alpha_c!}.
\end{equation*}
We describe Macaulay inverse systems via Matlis duality.
Let ${\left(\bullet\right)}^\vee:=\Hom_{\BB}\left(\bullet,E\right)$ denote the Matlis dual functor (see e.g. \cite[Theorem 3.2.13]{BRUNS_HERZOG}).
For any $\BB$-submodule $\VV \subseteq \BB^r$, there is a natural identification
\begin{equation} \label{eq_V_perp}
\bigl(\BB^r/\VV \bigr)^\vee \cong \VV^{\perp} := \left\lbrace w \in \EE^r \mid  v \cdot w = \sum_{i=1}^{r}v_i \cdot w_i = 0 \mbox{ for all } v\in \VV \right\rbrace \subseteq \EE^r.
\end{equation}
On the other hand, any $\BB$-submodule $\WW \subseteq  \EE^r$
is an $\FF$-subspace of $\EE^r$
that is closed under differentiation, since $y_i$ acts as the operator $\partial_{z_i}$.
Thus we also obtain an identification
\begin{equation} \label{eq_W_perp}
\bigl(\EE^r/\WW \bigr)^\vee \cong \WW^{\perp} := \left\lbrace v \in \BB^r \mid  v \cdot w = \sum_{i=1}^{r}v_i \cdot w_i =0 \mbox{ for all } w \in \WW\right\rbrace \subseteq \BB^r.
\end{equation}
The above discussion then recovers the following well-known result.

\begin{theorem}[Macaulay inverse systems] \label{thm:Macaulay_dual}
	With the above notation, there is a bijection
	$$
	\begin{array}{ccc}
		\left\lbrace\begin{array}{c}
			\mbox{points in }{\rm Quot}^m\left(\BB^r\right)\\
		\end{array}\right\rbrace
		
		& \longleftrightarrow &
		
		\left\lbrace\begin{array}{c}
			\mbox{$m$-dimensional $\FF$-subspaces of}\\
			\mbox{$\EE^r$ closed under differentiation}
		\end{array}\right\rbrace\\
		
		\VV & \longrightarrow & \WW=\VV^\perp\\
		\VV= \WW^\perp & \longleftarrow & \WW.\\
	\end{array}
	$$
\end{theorem}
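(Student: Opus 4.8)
The plan is to deduce the bijection from the standard machinery of Matlis duality over the complete local ring $\BB = \FF[[y_1,\ldots,y_c]]$, combining it with \autoref{rem_contain_power} to pin down which $\FF$-subspaces of $\EE^r$ actually arise. The map in one direction sends a point $\VV \in {\rm Quot}^m(\BB^r)$ (i.e. a $\BB$-submodule $\VV \subseteq \BB^r$ with $\dim_\FF(\BB^r/\VV) = m$) to $\WW := \VV^\perp \subseteq \EE^r$ as in \autoref{eq_V_perp}, and the map in the other direction sends an $\FF$-subspace $\WW \subseteq \EE^r$ closed under differentiation to $\WW^\perp \subseteq \BB^r$ as in \autoref{eq_W_perp}.

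First I would check that $\VV^\perp$ lands in the right target. Since $E = E_\BB(\FF)$ is the injective hull of the residue field and $\BB$ is complete local Noetherian, Matlis duality gives $\bigl(\BB^r/\VV\bigr)^\vee \cong \VV^\perp$ as in \autoref{eq_V_perp}, and because Matlis duality preserves length on modules of finite length, $\dim_\FF(\VV^\perp) = \dim_\FF(\BB^r/\VV) = m$. Under the identification $E \cong \EE$ (valid since $\FF$ has characteristic zero), the $\BB$-module structure on $\VV^\perp$ is exactly the one where $y_i$ acts as $\partial_{z_i}$, so $\VV^\perp$ is an $m$-dimensional $\FF$-subspace of $\EE^r$ closed under differentiation, as required. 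Conversely, given such a $\WW \subseteq \EE^r$, closure under differentiation means precisely that $\WW$ is a $\BB$-submodule of $\EE^r$; since $\WW$ has finite length $m$ as a $\BB$-module, $\WW^\perp = \bigl(\EE^r/\WW\bigr)^\vee$ is a $\BB$-submodule of $(\EE^r)^\vee \cong \BB^r$ of colength $m$, i.e. a point of ${\rm Quot}^m(\BB^r)$. (Here one uses that $\EE \cong E$ is an injective cogenerator, so $(\EE^r)^\vee \cong (\EE^\vee)^r \cong \BB^r$, and that $\EE^r$ is a union of its finite-length submodules, so Matlis duality on finite-length submodules of $\EE^r$ is well behaved and $\dim_\FF$-preserving.)

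Next I would verify that the two assignments are mutually inverse. The biduality statement in Matlis theory gives, for a finite-length $\BB$-module $X$, a canonical isomorphism $X \cong X^{\vee\vee}$; applying this to $X = \BB^r/\VV$ yields $(\VV^\perp)^\perp = \VV$, and applying it to $X = \EE^r/\WW$ (which has finite length) yields $(\WW^\perp)^\perp = \WW$. The one point needing care is that $\VV$ and $\WW$ are submodules rather than quotients, so I would phrase biduality in terms of the order-reversing correspondence between submodules of $\BB^r$ of finite colength and submodules of $\EE^r$ of finite length induced by $(-)^\perp$ — this is where \autoref{rem_contain_power} enters: it guarantees $\VV \supseteq \nn^m\BB^r$, so $\BB^r/\VV$ is genuinely a finite-length module annihilated by a power of $\nn$, placing us squarely in the finite-length Matlis duality setting where these identifications are standard (see \cite[\S 3.2]{BRUNS_HERZOG}).

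The main obstacle, such as it is, is bookkeeping rather than mathematics: one must be careful that the perp operations in \autoref{eq_V_perp} and \autoref{eq_W_perp} are genuinely adjoint — i.e. that the pairing $\BB^r \times \EE^r \to E \cong \EE$, $(v,w) \mapsto \sum_i v_i \cdot w_i$, is a perfect pairing of $\BB$-modules identifying $\EE^r$ with the Matlis dual of $\BB^r$ and vice versa — and that taking perps commutes with restricting to finite colength/length. Once this compatibility is set up, the bijection and its inverse both follow formally from Matlis biduality, and the dimension count is automatic from length-preservation.
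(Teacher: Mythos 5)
Your proposal is correct and takes essentially the same route as the paper: the paper obtains the theorem directly from the preceding Matlis-duality discussion (the identification $E_\BB(\FF)\cong\EE$ with $y_i$ acting as $\partial_{z_i}$ and the perp identifications \autoref{eq_V_perp} and \autoref{eq_W_perp}), citing it as well known rather than writing out a separate proof. Your version merely makes explicit the finite-length bookkeeping (length preservation, biduality, and the reduction via \autoref{rem_contain_power}) that the paper leaves to the standard references.
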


\subsection{The proof of the representation theorem}
\label{subsect_proof_represent}
In this subsection, we complete the proof of \autoref{thm_main}.
First, we recall some notation and results from \cite{PRIM_IDEALS_DIFF_EQS}.
Every differential operator $\delta \in D_{n,c}$ is a unique $\kk$-linear combination  of standard monomials  $\,\mathbf{x}^\alpha \partial_\mathbf{x}^\beta =  x_1^{\alpha_1} \cdots x_n^{\alpha_n}    \partial_{x_1}^{\beta_1} \cdots \partial_{x_c}^{\beta_c}$, where $\alpha_i, \beta_i \in \NN$.
Consider the \emph{Weyl-Noether module}
$$
\FF \otimes_R (D_{n,c})^r = \FF \otimes_R {\Diff_{R/\kk[x_{c+1},\ldots,x_n]}(R,R)}^r \cong \FF \otimes_S {\Diff_{S/\LL}(S,S)}^r
$$
(for the isomorphism on the right, see e.g. \cite[Lemma 2.7]{NOETH_OPS}).
From \autoref{prop_represen_diff_opp} and the fact that $P_{S/\LL}^m$ is a free $S$-module, the Weyl-Noether module admits the following description 
$$
\FF \otimes_R  (D_{n,c})^r =  \FF  \otimes_S  \biggl(\lim\limits_{\substack{\longrightarrow \\m}} {\Diff_{S/\LL}^m(S,S)}^r \biggr) \cong  \lim\limits_{\substack{\longrightarrow \\m}}
 \Diff_{S/\LL}^m(S,\FF)^r =  \Diff_{S/\LL}(S,\FF)^r.
$$
Applying  \autoref{lem_descript_diff_op} with $J=\pp S$ gives  $\FF \otimes_R D_{n,c} \cong \Diff_{S/\LL}(S,\FF) \cong \bigoplus_{\alpha \in \NN^c}\FF \overline{\partial_{\mathbf{x}}^\alpha}$. 
We then get an isomorphism of $\FF$-vector spaces:
\begin{equation*}
\omega : \EE = \FF[z_1,\ldots,z_c] \rightarrow \FF \otimes_R D_{n,c}, \quad \mathbf{z}^\alpha \mapsto \overline{\partial_{\mathbf{x}}^\alpha} \; \text{ for all } \alpha \in \NN^c,
\end{equation*}
which in turn induces an isomorphism
\begin{equation}
	\label{eq_map_omega}
	\Omega : \EE^r \to \FF \otimes_R (D_{n,c})^r.
\end{equation}
Let $\AAA$ be the polynomial ring $\AAA := \FF[y_1,\ldots,y_c]$.
Using \autoref{nota_T} over the field $\LL = \kk(x_{c+1},\ldots,x_n)$ with $T_S = S \otimes_\LL S =\LL[x_1,\ldots,x_c,y_1,\ldots,y_c]$ gives that $\AAA \cong \FF \otimes_\LL S  \cong \FF \otimes_S (S \otimes_\LL S) \cong \FF \otimes_S T_S$, and that $\FF \otimes_R (D_{n,c})^r \cong \Diff_{S/\LL}(S,\FF)^r \cong \Diff_{S/\LL}(S^r,\FF)$ has a natural structure of $\AAA$-module.
We now identify the power series ring $\BB=\FF[[y_1,\ldots,y_c]]$ of \autoref{subsect_Quot_Macaulay} with the completion $\widehat{\AAA_\nn}$ of $\AAA$ with respect to the maximal irrelevant ideal $\nn = (y_1,\ldots,y_c) \subseteq \AAA$; by an abuse of notation $\nn$ is seen interchangeably as an ideal in both $\AAA$ and $\BB$.
In this way, we get actions of $\AAA$ on both $\EE$ and $\FF \otimes_R D_{n,c}$.
Explicitly, for $\alpha \in \NN^c$ and $1 \le i \le c$,
\begin{equation*}
	y_i \cdot \mathbf{z}^\alpha := \alpha_iz_1^{\alpha_1} \cdots z_i^{\alpha_i-1} \cdots z_c^{\alpha_c} \quad \text{ and } \quad
	y_i \cdot \overline{\partial_{\mathbf{x}}^\alpha} := \left[ \overline{\partial_{\mathbf{x}}^\alpha}, x_i \right] =
	\alpha_i\overline{\partial_{x_1}^{\alpha_1}\cdots \partial_{x_i}^{\alpha_i-1} \cdots \partial_{x_c}^{\alpha_c}}.
\end{equation*}
Therefore the map $\Omega$ in \autoref{eq_map_omega} gives a bijection between $\FF$-vector subspaces of $\EE^r$ closed under differentiation and $\AAA$-submodules of $\FF \otimes_R  (D_{n,c})^r$. The latter
structure as an $\AAA$-submodule is equivalent to being an $R$-subbimodule of the Weyl-Noether module $\FF \otimes_R  (D_{n,c})^r$.

\begin{proof}[Proof of \autoref{thm_main}]
	The bijections \hyperref[part_a]{(a)} $\leftrightarrow$ \hyperref[part_b]{(b)} and \hyperref[part_b]{(b)} $\leftrightarrow$ \hyperref[part_c]{(c)} have been described in \autoref{thm:param_primary} and \autoref{thm:Macaulay_dual}, respectively.
	By the discussion above, the map $\Omega$
	in \autoref{eq_map_omega} provides the bijection \hyperref[part_c]{(c)} $\leftrightarrow$ \hyperref[part_d]{(d)}.
	Due to \autoref{lem_descript_diff_op}, we can lift differential operators from $\FF \otimes_R (D_{n,c})^r$ to $(D_{n,c})^r$ (cf. \cite[Remarks 7, 8]{PRIM_IDEALS_DIFF_EQS}).

	To finish the proof of the theorem, it suffices to show that an $\FF$-basis of the subspace in \hyperref[part_d]{(d)} lifts to a set of Noetherian operators for the submodule $U$ in \hyperref[part_a]{(a)}.
	That is,
	\begin{enumerate}
		\item let $U \subseteq R^r$ be a $\pp$-primary $R$-submodule of multiplicity $m$ over $\pp$,
		\item by \autoref{thm:param_primary} let $\VV := \BB\cdot \gamma(U) + \nn^m \BB^r \subseteq \BB^r$ be the corresponding point in $\Quot^m(\BB^r)$,
		\item by \autoref{thm:Macaulay_dual} let $\WW := \VV^\perp \subseteq \EE^r$ be the corresponding $m$-dimensional $\FF$-subspace of $\EE^r$ closed under differentiation,
		\item let $\mathcal{E} := \Omega(\WW) \subseteq \FF \otimes_R (D_{n,c})^r  \cong \Diff_{S/\LL}(S^r,\FF)$ be the corresponding $R$-subbimodule of $\FF \otimes_R (D_{n,c})^r$, 
	\end{enumerate}
	 then we claim $\Sol(\mathcal{E}) = U \otimes_R S$.
	
	Similarly to \cite[Lemma 3.14]{NOETH_OPS} and \cite[Proposition 3]{PRIM_IDEALS_DIFF_EQS}, the statements below hold:
	\begin{itemize}
		\item $\Diff_{S/\LL}^{m-1}(S^r, \FF) \cong \Hom_S\big(\text{P}_{S/\LL}^{m-1}(S^r), \FF\big) \cong \Hom_\FF\left(\AAA^r/\nn^{m}\AAA^r, \FF\right)$ by \autoref{prop_represen_diff_opp} and  Hom-tensor adjunction.
		\item Since $\VV \supseteq \nn^m\BB^r$ (cf. \autoref{rem_contain_power}) and $\BB^r / \nn^m\BB^r \cong \AAA^r/\nn^{m}\AAA^r$, we get $\mathcal{F} \subseteq \Diff_{S/\LL}^{m-1}(S^r, \FF)$  determined by $\Hom_\FF\left(\BB^r/\VV, \FF\right)$.
		Here we have $\mathcal{F} \cong \Hom_\FF\left(\BB^r/\VV, \FF\right) \subseteq \Hom_\FF\left(\BB^r/\nn^{m}\BB^r, \FF\right)$.
		\item $\Sol(\mathcal{F}) = U \otimes_R S$ (see \cite[Lemma 3.14(iv)]{NOETH_OPS} and \cite[Proposition 3(iii)]{PRIM_IDEALS_DIFF_EQS}).
	\end{itemize}
	
It thus suffices to show that $\mathcal{E}$ and $\mathcal{F}$ coincide as $R$-subbimodules of $\FF \otimes_R (D_{n,c})^r$.
By the perfect pairing of \cite[Proof of Theorem 6.1]{PRIM_IDEALS_DIFF_EQS} or general duality results (see e.g. \cite[Proposition 21.4]{EISEN_COMM}), there are isomorphisms

\begin{align} \label{eq_isom_large}
\begin{split}
	\mathcal{F} &\cong \Hom_\FF\left(\BB^r/\VV, \FF\right)   \\
		&\cong \Hom_{\BB}\big(\BB^r/\VV, \Hom_{\FF}(\BB/\nn^m,\FF)\big) \qquad (\text{by  Hom-tensor adjunction}) \\
		&\cong  \Hom_{\BB}\big(\BB^r/\VV, \Hom_{\BB}(\BB/\nn^m,E)\big) \qquad (\text{by \cite[Proposition 21.4]{EISEN_COMM}}) \\
		&\cong \Hom_\BB\left(\BB^r/\VV, E\right) = \left(\BB^r/\VV\right)^\vee \qquad \quad (\text{by  Hom-tensor adjunction}).
\end{split}
\end{align}
	Recall from \autoref{subsect_Quot_Macaulay} that the isomorphism $\left(\BB^r/\VV\right)^\vee \cong \VV^\perp = \WW$ is explicitly described by identifying the inverted monomial $\frac{1}{\yy^\alpha}=\frac{1}{y_1^{\alpha_1}\cdots y_c^{\alpha_c}}$ with $\frac{\mathbf{z}^\alpha}{\alpha!} = \frac{z_1^{\alpha_1}\cdots z_c^{\alpha_c}}{\alpha_1!\cdots \alpha_c!}$ for all $\alpha = (\alpha_1,\ldots,\alpha_c) \in \NN^c$.
	On the other hand, the isomorphism $\left(\BB^r/\VV\right)^\vee \cong \mathcal{F}$ is explicitly described by identifying the inverted monomial $\frac{1}{\yy^\alpha}$ with the dual monomial $\left(\yy^\alpha\right)^*$ and then with $\frac{1}{\alpha!}\overline{\partial_{\mathbf{x}}^\alpha}$ (see \autoref{rem_diff_yy});  notice  that we have an explicit isomorphism 
	$$
	\left(\BB/\nn^m\right)^\vee = \Hom_{\BB}(\BB/\nn^m, E) = \left(0:_{\FF[y_1^{-1},\ldots,y_c^{-1}]} \nn^m\right) \cong \Hom_\FF\left(\BB/\nn^m, \FF\right), \qquad \frac{1}{\yy^\alpha} \mapsto \left(\yy^\alpha\right)^*.
	$$
	Thus $\mathcal{E}$ and $\mathcal{F}$ do indeed coincide as $R$-subbimodules of $\FF \otimes_R (D_{n,c})^r$, as desired.
\end{proof}

 Finally, we have the following consequence. 

\begin{proof}[Proof of \autoref{cor_main}]
	From the short exact sequence $
	0 \to K \to R^r \to M \to 0
	$, 
	an $R$-submodule $U \subseteq M$ corresponds to a unique $R$-submodule $\widetilde{U} \subseteq R^r$ such that $\widetilde{U} \supset K$, and since $M/U \cong R^r/\widetilde{U}$, it follows that $\widetilde{U}$ is a $\pp$-primary submodule of $R^r$ of multiplicity $m$ over $\pp$.
	
	Let $\widetilde{U} \subseteq R^r$ be a $\pp$-primary submodule of multiplicity $m$ over $\pp$.
	By using the correspondences of \autoref{thm_main}, we set $\VV = \BB \cdot \gamma\big(\widetilde{U}\big) + \nn^m \BB^r$, $\WW = \VV^\perp$ and $\mathcal{E} = \Omega(\WW)$.
	In the same way, let $\VV' =\BB \cdot \gamma(
	K) +  \nn^m\BB^r$, $\WW' = (\VV')^\perp$ and $\mathcal{E}' = \Omega(\WW')$.
	Since the following four conditions are equivalent
	$$
	\widetilde{U} \supset K, \quad \VV \supset \VV', \quad \WW \subseteq \WW' \quad \text{and} \quad \mathcal{E} \subseteq \mathcal{E}',
 	$$
	the result follows directly from \autoref{thm_main}.
\end{proof}

\section{Differential algorithms} \label{sect_diff_algos}

In this section, we present several algorithms, based on \autoref{section_main_thm} and the previous papers \cite{PRIM_IDEALS_DIFF_EQS} and \cite{DIFF_PRIM_DEC}, that deal with the task of representing modules via differential operators. 
Together, they show that there currently exist powerful and increasingly versatile differential tools to represent modules computationally.
These algorithms are:

\begin{enumerate}[(I)]
	\item \autoref{algo_noeth_ops}: compute a set of Noetherian operators for a primary submodule.
	\item \autoref{algo_backwards}: compute the primary submodule determined by a set of differential operators. This can be seen as the inverse process to \autoref{algo_noeth_ops}.
	\item \autoref{algo_diff_prim_dec}: compute a minimal differential primary decomposition for a submodule.
\end{enumerate}


\subsection{Noetherian operators vs primary submodules}

This subsection deals with the problem of representing a primary submodule via Noetherian operators. 
We continue to use the notation of \autoref{section_main_thm}, cf. \autoref{setup_1}.
In particular, we continue to assume that $x_{c+1}, \ldots, x_n$ is independent modulo $\pp$ (notice that this can always be achieved after a suitable linear change of coordinates, and is also automatic if the module $U$ is zero-dimensional).
First, we give an algorithm to compute a set of Noetherian operators for a primary submodule.

\begin{algorithm}[Noetherian operators for a primary submodule]\label{algo_noeth_ops}
	\hfill\\
	{\sc Input:} A $\pp$-primary submodule $U \subseteq R^r$ of $R^r$ of multiplicity $m$ over  $\pp$.\\
	{\sc Output:}	A set of Noetherian operators $\delta_1,\ldots,\delta_m \in (D_{n,c})^r$ that represents $U$ as in \autoref{eq_noeth_ops}.
	\begin{enumerate}[(1)]
		\item Compute the $\FF[[y_1,\ldots,y_c]]$-module $\VV = \FF[[y_1,\ldots,y_c]] \cdot \gamma(U) + (y_1,\ldots,y_c)^m\FF[[y_1,\ldots,y_c]]^r$ that corresponds to $U$ as in \autoref{eq_corr_a_b}. 		
		\item  Using linear algebra over $\FF$, compute an  $\FF$-basis
		$\{B_1,\ldots,B_m\} \subseteq \FF[z_1,\ldots,z_c]^r$ for the inverse system $\WW = \VV^\perp$ as in \autoref{eq_corr_b_c}. 
		\item Compute $C_1 := \Omega(B_1), \ldots, C_m := \Omega(B_m) \in \FF \otimes_R (D_{n,c})^r$ as in \autoref{eq_corr_c_d}.
		\item Return lifts of $C_1,\ldots,C_m$ in $(D_{n,c})^r$, as guaranteed by \autoref{lem_descript_diff_op}. 
	\end{enumerate}
\end{algorithm}
\begin{proof}[Proof of correctness of \autoref{algo_noeth_ops}]
	The correctness of this algorithm follows from \autoref{thm_main}.
\end{proof}

In \autoref{algo_noeth_ops} the output is a set of Noetherian operators in the relative Weyl algebra $(D_{n,c})^r$.
We now consider the reverse process, starting from operators in the whole Weyl algebra $D_n$.
We start with some basic facts regarding modules defined via differential operators.

\begin{remark}
	\label{rem_sol_bimod_gen}
	Let $\delta_1,\ldots,\delta_m \in \DiffR(R^r, R) = (D_n)^r$ be differential operators.
	Let $\mathcal{G} \subseteq (D_n)^r$ be the $R$-bimodule generated by $\delta_1,\ldots,\delta_m$.
	The following statements hold: 
	\begin{enumerate}[(i)]
		\item $\lbrace w \in R^r \mid \delta(w) \in \pp \text{ for all } \delta \in \mathcal{G}\rbrace$ is a $\pp$-primary $R$-submodule of $R^r$.
		\item $\lbrace w \in R^r \mid \delta(w) \in \pp \text{ for all } \delta \in \mathcal{G}\rbrace \subseteq \lbrace w \in R^r \mid \delta_i(w) \in \pp \text{ for all } 1 \le i \le m\rbrace$, and equality holds if and only if the right hand side
is an $R$-submodule of $R^r$.
	\end{enumerate}
\end{remark}
\begin{proof}
	For more details, see \cite[\S 3]{NOETH_OPS} and specifically \cite[Proposition 3.5]{NOETH_OPS}.
\end{proof}

\begin{remark}
	It is a challenging problem to decide when  $\lbrace w \in R^r \mid \delta_i(w) \in \pp \text{ for all } 1 \le i \le m\rbrace$ is an $R$-module for a given list $\delta_1,\ldots,\delta_m \in \DiffR(R^r, R) = (D_n)^r$ of differential operators. 
	This question was addressed in \cite[Theorem 3.1]{PRIM_IDEALS_DIFF_EQS}, and necessary and sufficient conditions were given there.
	However, as stated in \cite[Paragraph after Theorem 3.1]{PRIM_IDEALS_DIFF_EQS}, we currently do not have a good method to verify these conditions.
\end{remark}

In view of \autoref{rem_sol_bimod_gen}, it is desirable to treat the following ``closure operation'': given finitely many differential operators  $\delta_1,\ldots,\delta_m \in (D_n)^r$, compute the corresponding $\pp$-primary $R$-submodule 
$$
\lbrace w \in R^r \mid \delta(w) \in \pp \text{ for all } \delta \in \mathcal{G}\rbrace,
$$ where $\mathcal{G} \subseteq (D_n)^r$ is the $R$-bimodule generated by $\delta_1,\ldots,\delta_m$.

The subsequent arguments follow verbatim the techniques used in \autoref{section_main_thm}.
 Here we use the whole sets of variables $y_1,\ldots,y_n$ and $z_1,\ldots,z_n$ instead of just $y_1,\ldots,y_c$ and $z_1,\ldots,z_c$, respectively.
 The only issue with taking whole sets of variables is that \autoref{thm:param_primary} is no longer valid (as \cite[Proposition 1]{PRIM_IDEALS_DIFF_EQS} requires $\FF/\kk(x_{c+1}, \ldots, x_n)$ to be algebraic), but we may circumvent this via \autoref{rem_sol_bimod_gen}.
We have a canonical map 
\begin{equation}
	\label{eq_map_Phi}
	\Phi : \DiffR(R^r, R) = (D_n)^r \rightarrow \FF \otimes_R (D_{n})^r.
\end{equation}
Following \autoref{nota_T}, by \autoref{prop_represen_diff_opp}  (see \autoref{rem_diff_yy}) we obtain the isomorphism
\begin{equation}
	\label{eq_isom_Diff_dual}
	\FF \otimes_R \DiffR^m(R^r,R) \cong \FF \otimes_R \Hom_R\big(P_{R/\kk}^m(R^r), R\big)  \cong  \Hom_{\FF}\left(\frac{\AAA^r}{\nn^{m+1}\AAA^r}, \FF\right)
\end{equation}
where $\AAA = \FF[y_1,\ldots,y_n]$ and $\nn = (y_1,\ldots,y_n)$.
Let $\BB=\FF[[y_1,\ldots,y_n]]$ and $\EE = \FF[z_1,\ldots,z_n]$, and as before, consider $\EE$ as a $\BB$-module by setting $y_i = \partial_{z_i}$ for all $1 \le i \le n$.
As is \autoref{eq_V_perp} and \autoref{eq_W_perp}, we can define $\VV^\perp$ and $\WW^\perp$ for $\VV \subseteq \BB^r = \FF[[y_1,\ldots,y_n]]^r$ and $\WW \subseteq \EE^r = \FF[z_1,\ldots,z_n]^r$, respectively.
In this setting, Macaulay inverse systems (\autoref{thm:Macaulay_dual}) are also valid.
Now, the equivalent map of $\gamma$ in \autoref{eq_map_gamma} is given by 
\begin{equation}
	\label{eq_map_Gamma}
	\Gamma : R^r \hookrightarrow \BB^r = \FF[[y_1,\ldots,y_n]]^r, \quad (f_1,\ldots,f_r) \in R^r \mapsto (\eta'(f_1),\ldots,\eta'(f_r)) \in \BB^r
\end{equation}
where $\eta' : R \hookrightarrow \BB$, $x_i \mapsto y_i + \overline{x_i}$ for all $1 \le i \le n$, and the equivalent of the map $\Omega$ in \autoref{eq_map_omega} is given by 
\begin{equation}
	\label{eq_map_Psi}
	\Psi : \EE^r = \FF[z_1,\ldots,z_n]^r \rightarrow \FF \otimes_R (D_{n})^r
\end{equation}
and is induced by $\mathbf{z}^\alpha \mapsto \overline{\partial_{\mathbf{x}}^\alpha}$ for all $\alpha \in \NN^n$ (in this setting, under the isomorphism  \autoref{eq_isom_Diff_dual} with $r = 1$ and $|\alpha| \le m$, the dual monomial $\left(\yy^\alpha\right)^*$ coincides with $\overline{\partial_{\mathbf{x}}^\alpha} \in \FF\otimes_R D_n$).

After the above discussion, we can present the following algorithm which can be seen as an inverse process to \autoref{algo_noeth_ops}.

\begin{algorithm}[Primary submodule that corresponds to a set of differential operators] \label{algo_backwards}\hfill\\
	{\sc Input:} A prime ideal $\pp \in \Spec(R)$ and a set of differential operators $\delta_1,\ldots,\delta_m \in (D_n)^r$.\\
	{\sc Output:}	The $\pp$-primary $R$-submodule $\lbrace w \in R^r \mid \delta(w) \in \pp \text{ for all } \delta \in \mathcal{G}\rbrace$ of $R^r$, where $\mathcal{G} \subseteq (D_n)^r$ is the $R$-bimodule generated by $\delta_1,\ldots,\delta_m$.
	\begin{enumerate}[(1)]
		\item By using \autoref{eq_map_Phi} and \autoref{eq_map_Psi}, compute $A_1 := \Psi^{-1}(\Phi(\delta_1)), \ldots, A_m := \Psi^{-1}(\Phi(\delta_m)) \in  \FF[z_1,\ldots,z_n]^r$.
		\item  Using linear algebra over $\FF$, compute the inverse system 
		$$
		\VV := \left(A_1,\ldots,A_m\right)^\perp = \Big\lbrace v \in \FF[[y_1,\ldots,y_n]]^r \mid  v \cdot A_i = 0 \mbox{ for all }
		1 \le i \le m\Big\rbrace. 
		$$
		\item  Return the $R$-submodule $U := \Gamma^{-1}(\VV) \subseteq R^r$ where $\Gamma$ is the map in \autoref{eq_map_Gamma}.
	\end{enumerate}
\end{algorithm}
\begin{proof}[Proof of correctness of \autoref{algo_backwards}]
	Let $\mathcal{G} \subseteq  (D_n)^r$ be the $R$-bimodule generated by $\delta_1,\ldots,\delta_m$, and let $\mathcal{E} = \Phi(\mathcal{G}) \subseteq \FF \otimes_R (D_n)^r$. 
	Then $\WW = \Psi^{-1}(\mathcal{E}) \subseteq \EE^r =  \FF[z_1,\ldots,z_n]^r$ is an $\FF$-vector subspace that is closed under differentiation (the same arguments of \autoref{eq_corr_c_d}).
	Also the equalities $\VV = \WW^\perp \subseteq  \BB^r$ and $\WW = \VV^\perp \subseteq \EE^r$ hold.
	In a similar way to the proof of \autoref{thm_main} (see \autoref{eq_isom_large}), we obtain
$\mathcal{E}  \cong  \Hom_{\FF}\left(\BB^r/\VV, \FF\right)$.
	By the same general arguments of \cite[Lemma 3.14]{NOETH_OPS} and \cite[Proposition 3]{PRIM_IDEALS_DIFF_EQS}, it follows that 
	$\lbrace w \in R^r \mid \delta(w) \in \pp \text{ for all } \delta \in \mathcal{G}\rbrace = \Sol(\mathcal{E}) = \Gamma^{-1}(\VV) = U$, which is $\pp$-primary by \autoref{rem_sol_bimod_gen}.
\end{proof}

\begin{remark}
	When we consider whole sets of variables $y_1,\ldots,y_n$ and $z_1,\ldots,z_n$, we can describe primary submodules in quite different ways. 
	This obstacle tells us that without restricting the variables, the best extension of \autoref{thm_main} that we can obtain is that: we can always recover a primary submodule from any of its possibly many ways to encode it with Noetherian operators.
	In \autoref{thm_main} we obtained a \emph{bijection} between four different sets of objects, but if we allow whole sets of variables a bijection is not attainable.
	For instance, see \cite[Example 10]{PRIM_IDEALS_DIFF_EQS}, where a primary ideal of codimension two in $\kk[x_1,\ldots,x_4]$ is represented with two quite different sets of Noetherian operators. 
	One of them is given by sixteen differential operators with constant coefficients in $D_4$, and the other is given by eleven differential operators (some of them with polynomial coefficients) in $D_{4,2} \subset D_4$.
\end{remark}

\subsection{Differential primary decomposition for modules}
\label{subsect_diff_prim_dec}
The main goal of this subsection is to provide an algorithm that computes a \emph{minimal differential primary decomposition} for general modules.
The concept of a \emph{differential primary decomposition} was introduced in \cite{DIFF_PRIM_DEC}, and an algorithm for the case of ideals was given in \cite[Algorithm 5.4]{DIFF_PRIM_DEC}.


First, we extend the notation in \autoref{setup_1}, for multiple primes $\pp_i$.
 For a set of variables $\mathcal{S} = \{x_{i_1},  \ldots, x_{i_\ell} \} \subseteq \{x_1,\ldots,x_n\}$, consider the corresponding \emph{relative Weyl algebra},
 which consists of $\KK[\mathcal{S}]$-linear differential operators on $R$:
 $$
 D_{n}(\mathcal{S}) := \Diff_{R/\KK[\mathcal{S}]}(R,R) = R \big< \partial_{x_i} \mid x_i \not\in \mathcal{S} \big> \subseteq  R\langle \partial_{x_1},\ldots, \partial_{x_n} \rangle = D_n.
 $$
 We now recall the definition of the main object of interest in this section.

\begin{definition}[{cf. \cite[Definition 5.2, Definition 4.1]{DIFF_PRIM_DEC}}] \label{def:diffPrimDec}
	Let $U \subseteq R^r$ be an $R$-module,
	with associated primes $\Ass(R^r/U) =: \{\pp_1,\ldots,\pp_k\}$.
	A \emph{differential primary decomposition} of $U$ is a list of triples
	$$(\pp_1, \mathcal{S}_1, \AAA_1), \; (\pp_2, \mathcal{S}_2, \AAA_2), \; \ldots, \; (\pp_k, \mathcal{S}_k, \AAA_k)$$
	where $\mathcal{S}_i$ is a basis modulo $\pp_i$ and
	$\AAA_i \subseteq D_{n}(\mathcal{S}_i)^r$ is a finite set of differential operators such that
	$$ 
	\qquad
	U_\pp \cap R^r \,\,=\; \bigcap_{i: \pp_i \subseteq \pp} \!\!
	\big\lbrace w \in R^r \mid \delta(w) \in \pp_i \text{ for  all }   \delta \in \AAA_i  \big\rbrace \quad
	\text{for each $\pp \in \Ass(R^r/U)$.}
	$$
	These conditions imply that 
	$
	U \;= \; \big\lbrace w \in R^r \mid \delta(w) \in \pp_i \text{ for  all }  \delta \in \AAA_i
	\text{ and } 1 \le i \le k \big\rbrace
	$.
	The \emph{size} of the differential primary decomposition is defined to be $\sum_{i=1}^k |\AAA_i|$.
\end{definition}

\begin{definition}[{\cite{STV_DEGREE}, \cite[Definition 4.3]{DIFF_PRIM_DEC}}] \label{def:amult}
	For an $R$-submodule $U \subseteq R^r$, its \emph{arithmetic multiplicity} 
	is the non-negative integer
	$$ \amult(U) \,\, :=  \sum_{\pp \in \Ass(R^r/U)} \!\!\!
	\text{length}_{R_\pp}\left(\HH_{\pp}^0\Big(R^r_\pp/U_\pp\Big)\right) 
	\quad = \sum_{\pp \in \Ass(R^r/U)} \!\!\! \text{length}_{R_\pp}\left(
	\frac{\left(U_\pp:_{R^r_\pp} {(\pp R_\pp)}^{\infty}\right)}{U_\pp} \right).
	$$
\end{definition}

In \cite{DIFF_PRIM_DEC}, it was shown that the arithmetic multiplicity is the minimal size for a differential primary decomposition of a module (see \cite[Theorems 3.6, 4.6, 5.3]{DIFF_PRIM_DEC}).
We point out that a differential primary decomposition \emph{should not} be thought of as a notion directly related to an ordinary primary decomposition.
In \autoref{def:diffPrimDec}, a differential primary decomposition is carefully designed to recover the localizations of a module along its associated primes; recall that the embedded components of a primary decomposition are not an invariant of a module, but the localizations at the associated primes are.
Given an $R$-module $U \subseteq R^r$ and a differential primary decomposition $(\pp_1, \mathcal{S}_1, \AAA_1),  \ldots, (\pp_k, \mathcal{S}_k, \AAA_k)$,  the $\kk$-vector space $\lbrace w \in R^r \mid \delta(w) \in \pp_i \text{ for  all }   \delta \in \AAA_i  \rbrace$ may not be an $R$-module for some $1 \le i \le k$.
Below we have an example signaling this important phenomenon.

\begin{example}
	\label{examp_dumb}
	Let $R=\kk[x_1,x_2]$  and $I = (x_1^2, x_1x_2) \subset R$ be an ideal.
	The associated primes of $I$ are $\pp_1 = (x_1)$ and $\pp_2  = (x_1,x_2)$.
	A minimal differential primary decomposition of $I$ is given by the two triples $(\pp_1,\{x_1\}, \{1\} )$ and $(\pp_2,\{\}, \{\partial_{x_1}\} )$, and the arithmetic multiplicity of $I$ is $\amult(I)=2$.
	We have the following equality
	$$
	I = \{f \in R \mid  f \in \pp_1 \} \;\cap\;  \{f \in R \mid  \tfrac{\partial f}{\partial x_1} \in \pp_2 \}.
	$$	
	Notice that $V_2 = \{f \in R \mid  \tfrac{\partial f}{\partial x_1} \in \pp_2 \}$ is the $\kk$-vector space of polynomials $f \in R$ such that $x_1$ does not appear in the expansion of $f$.
	Therefore, since $1 \in V_2$ and yet $x_1 \not\in V_2$, it follows that $V_2$ is not an $R$-module.
	
	Finally, an ordinary primary decomposition of $I$ is given by $I = \pp_1 \cap (x_1^2, x_2)$, where the primary components are described as $\pp_1 = \{f \in R \mid  f \in \pp_1 \}$ and $(x_1^2, x_2) = \{f \in R \mid f \in \pp_2 \text{ and }  \tfrac{\partial f}{\partial x_1} \in \pp_2 \}$ in terms of Noetherian operators --	notice that this gives the \emph{non-minimal} differential primary decomposition $(\pp_1,\{x_1\}, \{1\} )$ and $(\pp_2,\{\}, \{1, \partial_{x_1}\} )$.	
\end{example}

By using our representation theorem (\autoref{thm_main}), we can extend \cite[Algorithm 5.4]{DIFF_PRIM_DEC} from ideals to modules.
The following algorithm computes a minimal differential primary decomposition for a module.
We note that this algorithm does not depend on computing a primary decomposition.

\begin{algorithm}[Differential primary decomposition for modules]\label{algo_diff_prim_dec}
	\hfill\\
	{\sc Input:} An $R$-submodule $U \subseteq R^r$. \\
	{\sc Output:}	A differential primary decomposition for $U$ of minimal size ${\rm amult}(U)$.
	
	\begin{enumerate}[(1)]
		\item Compute the set of associated primes $\Ass(R^r/U) =: \{\pp_1,\ldots,\pp_k\}$, e.g. via \autoref{thm:associatedPrimesEHV}.
		\item For $i$ from $1$ to $k$ do:
		\begin{enumerate}[(2.1)]
			\item Compute a basis $\mathcal{S}_i$ modulo $\pp_i$, and let $\FF_i := k(\pp_i)$ be the residue field of $\pp_i$.
			\item Compute $\mathcal{U} := U_{\pp_i} \cap R^r$ -- this is the intersection of
			all primary components of $U$ whose associated prime is contained in $\pp_i$, e.g. via \autoref{prop:localPrimaryComponents}.
			\item Compute $\mathcal{V} := \mathcal{U} :_{R^r} \pp_i^\infty $ -- this is the intersection of
			all  primary components of $U$ whose associated prime is strictly contained in $\pp_i$, e.g. via \autoref{prop:localPrimaryComponents}.
			\item Find $m >0$ giving the isomorphism in \cite[Proposition 4.5]{DIFF_PRIM_DEC}(i):
			$$
			\mathcal{V}/\mathcal{U}  \xrightarrow{\cong} (\mathcal{V} + {\pp_i}^mR^r)/(\mathcal{U} + {\pp_i}^mR^r).
			$$
			\item  By using \autoref{thm_main} and \autoref{algo_noeth_ops}, compute the $\FF_i$-vector subspaces $\mathcal{E}$ and $\mathcal{H}$ of the Weyl-Noether module 
			$\FF_i \otimes_R D_n(\mathcal{S}_i)^r$.
			These are $(R \otimes_{\KK[\mathcal{S}_i]} R)$-modules that correspond to the $\pp_i$-primary submodules $\mathcal{U} + \pp_i^mR^r$ and $\mathcal{V} + \pp_i^mR^r$, respectively.
			\item Compute an $\FF_i$-basis $\overline{\AAA_i}$ of an $\FF_i$-vector space complement $\mathcal{G}$ of $\mathcal{H}$ in $\mathcal{E}$, i.e. $\mathcal{E} = \mathcal{H} \oplus \mathcal{G}$.
			\item Lift the basis $\overline{\AAA_i}$ to a subset $\AAA_i \subseteq D_n(\mathcal{S}_i)^r$.
		\end{enumerate}
		\item 
		Return the triples $(\pp_1, \mathcal{S}_1, \AAA_1)$, $\ldots$, $(\pp_k, \mathcal{S}_k, \AAA_k)$.
	\end{enumerate}
\end{algorithm}
\begin{proof}[Proof of correctness of \autoref{algo_diff_prim_dec}]
	This algorithm is correct because it realizes the steps in the proof of \cite[Theorem 5.3 (i)]{DIFF_PRIM_DEC}, as generalized by \autoref{thm_main} and \autoref{algo_noeth_ops}.
	(For the case of ideals, see \cite[Algorithm 5.4]{DIFF_PRIM_DEC}.)
\end{proof}

\begin{remark}
	\label{rem_dumb}
	The reverse process of \autoref{algo_diff_prim_dec} can also be achieved.
	If we are given a differential primary decomposition $(\pp_1, \mathcal{S}_1, \AAA_1)$, $\ldots$, $(\pp_k, \mathcal{S}_k, \AAA_k)$ as the output of \autoref{algo_diff_prim_dec}, then via \autoref{algo_backwards} we can compute the $\pp_i$-primary submodule $U_i \subseteq R^r$ that corresponds to $\AAA_i$, so that $U = \bigcap_{i=1}^k U_i$.
	In this way a differential primary decomposition gives a primary decomposition as in \autoref{sect_symb_prim_dec}.
	In general though, this ``dual'' algorithm for primary decomposition will be slower than the algorithm given in \autoref{sect_symb_prim_dec}, as the bulk of the computation  in \autoref{algo_diff_prim_dec}  is subsumed in step (2.5) which entails heavy computations over  the residue field $\FF_i$.
\end{remark}

\section{A differential description of joins}
\label{sect_joins}

In this short section, we provide an intrinsic differential description of the ideal join of an ideal and a primary ideal with respect to the maximal irrelevant ideal. 	
This result was implicitly obtained in the proof of \cite[Theorem 7.1]{PRIM_IDEALS_DIFF_EQS}, but the statement was not explicitly given as the objective was to get an extension of a result of Sullivant \cite[Proposition~2.8]{SULLIVANT_SYMB}.

Let $\kk$ be a field of characteristic zero, $R = \kk[x_1,\ldots,x_n]$ be a polynomial ring and $\mm = (x_1,\ldots,x_n) \subseteq R$ be the maximal irrelevant ideal. 
If $J$ and $K$ are ideals in $R$, then their \textit{join} is given by the new ideal
$$
J \star K \,\,\,:=\,\,\, \Big( J(\mathbf{v}) \,+\, K(\mathbf{w}) \,+\,
\left( x_i - v_i - w_i \mid 1 \le i \le n \right) \Big) \,\,\cap \,\,R,
$$
where $J(\mathbf{v}) \subseteq \kk[v_1,\ldots,v_n]$ is the ideal $J$ with new variables $v_i$ substituted for $x_i$ and $K(\mathbf{w}) \subseteq \kk[w_1,\ldots,w_n]$ is the ideal $K$ with  $w_i$ substituted for $x_i$.	

The next definitions are obtained by mimicking  \cite[Definition 4]{PRIM_IDEALS_DIFF_EQS}.

\begin{definition}
	Let $M \subseteq R$ be an $\mathfrak{m}$-primary ideal.
	We compute a finite dimensional $\kk$-subspace $\mathfrak{A}(M) \subseteq \kk[\partial_{x_1},\ldots,\partial_{x_n}]$ of differential operators with constant coefficients by performing the following~steps:
	\begin{enumerate}[(i)]
		\item Interpret the variable $x_i$ as $\partial_{z_i}$
		for $i=1,\ldots,n$. 
		\item Compute $M^\perp=\left\lbrace F\in \kk[z_1,\ldots,z_n] \mid f\cdot F=0 \mbox{ for all }f\in M\right\rbrace$. 
		\item Let $\mathfrak{A}(M) \subseteq \kk[\partial_{x_1},\ldots,\partial_{x_n}]$ be the image of $M^\perp$ under the map $\mathbf{z}^\alpha \mapsto \partial_{\mathbf{x}}^\alpha$.
	\end{enumerate}
\end{definition}

A $\kk$-subspace $V \subseteq \kk[\partial_{x_1}, \ldots,\partial_{x_n}]$ is said to be \emph{closed under the bracket operation} if $[\delta, x_i] \in V$ for all $\delta \in V$ and $1 \le i \le n$.
Recall that $\left[\partial_{x_1}^{\alpha_1}\cdots\partial_{x_i}^{\alpha_i} \cdots \partial_{x_n}^{\alpha_n}, \, x_i\right] = \alpha_i \partial_{x_1}^{\alpha_1}\cdots\partial_{x_i}^{\alpha_i-1} \cdots \partial_{x_n}^{\alpha_n}$ for all $\alpha \in \NN^n$ and $1 \le i \le n$.

\begin{definition}
	Let $V \subseteq \kk[\partial_{x_1},\ldots,\partial_{x_n}]$ be a finite dimensional $\kk$-subspace closed under the bracket operation.
	We compute an $\mm$-primary ideal  $\mathfrak{B}(V) \subseteq R$  by performing the following~steps:
	\begin{enumerate}[(i)]
		\item Interpret the variable $x_i$ as $\partial_{z_i}$
		for $i=1,\ldots,n$. 
		\item Let $W \subseteq \kk[z_1,\ldots,z_n]$ be the image of $V$ under the map $\partial_{\mathbf{x}}^\alpha \mapsto \mathbf{z}^\alpha$.
		\item Let  $\mathfrak{B}(V) = W^\perp=\left\lbrace f \in R \mid f\cdot F=0 \mbox{ for all } F\in W\right\rbrace$. 
	\end{enumerate}
\end{definition}

The following result can be easily deduced from \cite[Theorem 7.1]{PRIM_IDEALS_DIFF_EQS}.

\begin{theorem}
	\label{thm_join}
	Let $J \subseteq R$ be an ideal.
	Then, the following statements hold: 
	\begin{enumerate}[(i)]
		\item If $M \subseteq R$ is an $\mm$-primary ideal, then 
		$
		J \star M \; = \;  \big\lbrace f \in R \mid \delta \cdot f \in J \;
		\text{ for all }\; \delta \in \mathfrak{A}(M) \big\rbrace. 
		$
		\item If $V \subseteq \kk[\partial_{x_1},\ldots,\partial_{x_n}]$ is a finite dimensional $\kk$-subspace closed under the bracket operation, then 
		$
		\big\lbrace f \in R \mid \delta \cdot f \in J \;
		\text{ for all }\; \delta \in V \big\rbrace \; = \; J \star \mathfrak{B}(V).
		$
	\end{enumerate}
\end{theorem}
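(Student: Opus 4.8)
The strategy is to prove part (i) directly by unwinding the definition of the join, and then obtain part (ii) formally from part (i) via Macaulay duality; the computation underlying (i) is essentially the one already performed inside the proof of \cite[Theorem 7.1]{PRIM_IDEALS_DIFF_EQS}, so an alternative is simply to isolate and cite that step.

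For (i), fix an ideal $J \subseteq R$ and an $\mm$-primary ideal $M \subseteq R$. Eliminating the variables $x_i$ from the defining ideal of the join by means of the relations $x_i = v_i + w_i$ gives the standard reformulation $f \in J \star M \iff f(v_1+w_1,\ldots,v_n+w_n) \in J(\mathbf{v}) + M(\mathbf{w})$ as an ideal in $\kk[\mathbf{v},\mathbf{w}]$. Now $R/M$ is a finite-dimensional local $\kk$-algebra, and classical Macaulay duality furnishes a perfect $\kk$-bilinear pairing $R/M \times M^\perp \to \kk$, $(\,\overline{h}, F\,) \mapsto \bigl(F(\partial_{\mathbf{x}})\,h\bigr)(\mathbf{0})$, which is adjoint to the contraction action and identifies $M^\perp$ with $\Hom_\kk(R/M,\kk)$; here $F(\partial_{\mathbf{x}})$ is the constant-coefficient operator obtained from $F \in M^\perp \subseteq \kk[z_1,\ldots,z_n]$ via $z_i \mapsto \partial_{x_i}$, i.e.\ $F(\partial_{\mathbf{x}}) \in \mathfrak{A}(M)$, and the operators $F(\partial_{\mathbf{x}})$ with $F$ ranging over a $\kk$-basis of $M^\perp$ form a $\kk$-basis of $\mathfrak{A}(M)$. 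Since $\kk[\mathbf{v},\mathbf{w}]/(J(\mathbf{v}) + M(\mathbf{w})) \cong (R/J)\otimes_\kk(R/M)$, an element $g \in \kk[\mathbf{v},\mathbf{w}]$ lies in $J(\mathbf{v}) + M(\mathbf{w})$ if and only if $\bigl(F(\partial_{\mathbf{w}})\,g\bigr)(\mathbf{v},\mathbf{0}) \in J(\mathbf{v})$ for every $F$ in a $\kk$-basis of $M^\perp$ (apply the $(R/J)$-valued functionals $\mathrm{id}\otimes\phi_F$ to the $\mathbf{w}$-slot, where $\phi_F$ is the functional on $R/M$ attached to $F$ by the pairing above). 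Taking $g = f(\mathbf{v}+\mathbf{w})$ and using the chain rule $\partial_{w_i}f(\mathbf{v}+\mathbf{w}) = (\partial_{x_i}f)(\mathbf{v}+\mathbf{w})$ yields $\bigl(F(\partial_{\mathbf{w}})\,f(\mathbf{v}+\mathbf{w})\bigr)(\mathbf{v},\mathbf{0}) = \bigl(F(\partial_{\mathbf{x}})\,f\bigr)(\mathbf{v})$, and hence, since $J$ is a $\kk$-subspace, $f \in J \star M \iff \delta\cdot f \in J$ for all $\delta \in \mathfrak{A}(M)$. This is (i).

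For (ii), one first records that $\mathfrak{A}$ and $\mathfrak{B}$ are mutually inverse bijections between $\mm$-primary ideals of $R$ and finite-dimensional $\kk$-subspaces of $\kk[\partial_{x_1},\ldots,\partial_{x_n}]$ closed under the bracket operation: under $z_i \leftrightarrow \partial_{x_i}$ the differentiation $\partial_{z_i}$ corresponds to bracketing with $x_i$ (because $[\partial_{\mathbf{x}}^\alpha, x_i] = \alpha_i\partial_{x_1}^{\alpha_1}\cdots\partial_{x_i}^{\alpha_i-1}\cdots\partial_{x_n}^{\alpha_n}$), so bracket-closed subspaces correspond to differentiation-closed inverse systems, and the claim is the $\pp = \mm$, $c = n$, $r = 1$ instance of \autoref{thm_main} (equivalently, classical Macaulay duality). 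Given such a $V$, set $M := \mathfrak{B}(V)$; then $M$ is $\mm$-primary with $\mathfrak{A}(M) = \mathfrak{A}(\mathfrak{B}(V)) = V$, and (i) gives $\{f \in R \mid \delta\cdot f \in J \text{ for all } \delta \in V\} = J \star M = J \star \mathfrak{B}(V)$, which is (ii).

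All the steps are classical and safe in characteristic zero; the one point requiring care is the middle of the argument for (i) — realizing the quotient of the join ideal as $(R/J)\otimes_\kk(R/M)$, correctly interpreting ``apply the dual basis of $R/M$ to the $\mathbf{w}$-variables only'', and matching the normalization of the Macaulay pairing (the one adjoint to contraction, given by $F \mapsto \bigl(F(\partial_{\mathbf{x}})\,\cdot\,\bigr)(\mathbf{0})$) with the factorial-free definition of $\mathfrak{A}$. Once that identification is pinned down, the chain-rule identity converting the inverse-system description of $M$ into operators acting on $f$ does all the work, and there is no genuinely hard step.
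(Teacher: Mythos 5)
Your proposal is correct, and for part (ii) it is essentially identical to the paper's argument: observe that $\mathfrak{A}$ and $\mathfrak{B}$ are mutually inverse (bracket-closed subspaces correspond to differentiation-closed inverse systems under $\partial_{\mathbf{x}}^\alpha \leftrightarrow \mathbf{z}^\alpha$, by Macaulay duality), set $M = \mathfrak{B}(V)$ so that $\mathfrak{A}(M) = V$, and apply part (i). The only real difference is in part (i): the paper does not reprove it at all, but simply notes that it \emph{is} the statement of \cite[Theorem 7.1(i)]{PRIM_IDEALS_DIFF_EQS}, whereas you supply a self-contained argument — eliminating $x_i = v_i + w_i$ to rewrite membership in the join as $f(\mathbf{v}+\mathbf{w}) \in J(\mathbf{v}) + M(\mathbf{w})$, identifying the quotient with $(R/J)\otimes_\kk(R/M)$, testing vanishing in the second (finite-dimensional) factor against the dual basis furnished by the apolarity pairing on $M^\perp$, and using the chain rule to convert $F(\partial_{\mathbf{w}})$ acting on $f(\mathbf{v}+\mathbf{w})$ into $\bigl(F(\partial_{\mathbf{x}})f\bigr)(\mathbf{v})$. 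That computation is sound (the perfectness of the pairing and the well-definedness of the functionals $\phi_F$ on $R/M$ only need characteristic zero and $\dim_\kk R/M < \infty$), and it is in substance the computation carried out inside the proof of the cited result, as you yourself note. So your route buys independence from the external reference at the cost of length, while the paper's proof is a two-line reduction plus citation; logically the two coincide.
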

\begin{proof}
	(i) This is the statement of \cite[Theorem 7.1 (i)]{PRIM_IDEALS_DIFF_EQS}.
	
	(ii) Let $V \subseteq \kk[\partial_{x_1},\ldots,\partial_{x_n}]$ be a finite dimensional $\kk$-subspace closed under the bracket operation, and set $M$ to be the corresponding $\mm$-primary ideal $M = \mathfrak{B}(V)$.
	Notice that $V = \mathfrak{A}(M)$ and $M = \mathfrak{B}(V)$, as both $\mathfrak{A}(M)$ and $\mathfrak{B}(V)$ are computed via Macaulay inverse systems.
	By \cite[Theorem 7.1 (i)]{PRIM_IDEALS_DIFF_EQS}, we obtain 
	$
	\big\lbrace f \in R \mid \delta \cdot f \in J \;
	\text{ for all }\; \delta \in V \big\rbrace \; = \; J \star M,
	$
	and so the result follows.
\end{proof}

Of particular interest is the case when $J$ is a prime ideal.
Let $\pp \in \Spec(R)$ be a prime ideal. 
A $\pp$-primary ideal $Q \subseteq R$ is said  to be \emph{representable by differential operators with constant coefficients} if there exist $\delta_1,\ldots,\delta_m \in \kk[\partial_{x_1},\ldots,\partial_{x_n}]$ such that $Q = \lbrace f \in R \mid \delta_i \cdot f \in \pp \text{ for all } 1 \le i \le m\rbrace$.
A $\pp$-primary $Q \subseteq R$ is said to be \emph{representable by the join construction} if there exists an $\mm$-primary ideal $M \subseteq R$ such that $Q = \pp \star M$.

\begin{corollary}
	Let $\pp \in \Spec(R)$ be a prime ideal and $Q \subseteq R$ be a $\pp$-primary ideal.
	Then, $Q$ is representable by differential operators with constant coefficients if and only if $Q$ is representable by the join construction.
\end{corollary}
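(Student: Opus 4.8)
The plan is to derive both implications directly from \autoref{thm_join}, using \autoref{rem_sol_bimod_gen} to bridge the gap between the ``finite list of operators'' formulation of representability and the ``$\kk$-subspace closed under the bracket operation'' hypothesis demanded by \autoref{thm_join}(ii).

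For the implication ``representable by the join construction $\Rightarrow$ representable by constant-coefficient operators'', I would take an $\mm$-primary ideal $M$ with $Q = \pp \star M$ and invoke \autoref{thm_join}(i) with $J = \pp$, which gives $Q = \{ f \in R \mid \delta \cdot f \in \pp \text{ for all } \delta \in \mathfrak{A}(M)\}$. Since $\mathfrak{A}(M) \subseteq \kk[\partial_{x_1},\ldots,\partial_{x_n}]$ is a finite-dimensional $\kk$-subspace and $\pp$ is itself a $\kk$-subspace of $R$, the condition ``$\delta \cdot f \in \pp$ for all $\delta \in \mathfrak{A}(M)$'' is equivalent to ``$\delta_i \cdot f \in \pp$ for $1 \le i \le m$'' once $\delta_1,\ldots,\delta_m$ is chosen to be a $\kk$-basis of $\mathfrak{A}(M)$. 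This settles that direction with no further work.

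For the converse, suppose $Q = \{ f \in R \mid \delta_i \cdot f \in \pp \text{ for } 1 \le i \le m\}$ with $\delta_1,\ldots,\delta_m \in \kk[\partial_{x_1},\ldots,\partial_{x_n}]$. I would let $\mathcal{G} \subseteq D_n$ be the $R$-bimodule generated by the $\delta_i$. Because the bracket of a constant-coefficient operator with any $x_j$ is again a constant-coefficient operator of strictly smaller order, all iterated brackets of the $\delta_i$ span a finite-dimensional $\kk$-subspace $V \subseteq \kk[\partial_{x_1},\ldots,\partial_{x_n}]$, closed under the bracket operation by construction, and one checks that $\mathcal{G} = R \cdot V$. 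Left multiplication by $R$ imposes no new constraint (if $\delta \cdot f \in \pp$ then $(s\delta)\cdot f = s(\delta \cdot f) \in \pp$), so the solution set of $\mathcal{G}$ equals the solution set of $V$; and by \autoref{rem_sol_bimod_gen}(ii), since $Q$ is an ideal (being $\pp$-primary by hypothesis), this common solution set is exactly $Q$. Applying \autoref{thm_join}(ii) to $V$ then yields $Q = \{ f \in R \mid \delta \cdot f \in \pp \text{ for all } \delta \in V\} = \pp \star \mathfrak{B}(V)$, and $\mathfrak{B}(V)$ is $\mm$-primary by definition, so $Q$ is representable by the join construction.

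The step I expect to require the most care is precisely the passage from the finite list $\delta_1,\ldots,\delta_m$ to its bracket closure $V$: one must verify that enlarging the system of operators does not shrink the solution set. This is the content of \autoref{rem_sol_bimod_gen}(ii), but applying it cleanly hinges on two observations that should be spelled out — that the solution set is an ideal thanks to the $\pp$-primary hypothesis on $Q$, and that for constant-coefficient operators the $R$-bimodule they generate is simply $R \cdot V$ with $V$ finite-dimensional and bracket-closed. Everything else reduces to a direct citation of \autoref{thm_join}.
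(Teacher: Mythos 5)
Your proof is correct and takes essentially the same route as the paper, whose proof of this corollary is simply a citation of \autoref{thm_join} for both implications. The only difference is that you explicitly supply the bridging detail for the converse -- replacing the finite list of constant-coefficient operators by its bracket closure $V$ and checking, via \autoref{rem_sol_bimod_gen} and the fact that $Q$ is an ideal, that the solution set is unchanged -- which the paper leaves implicit.
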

\begin{proof}
	It follows from \autoref{thm_join}.
\end{proof}

\section{Examples} \label{sect_computations}

We conclude by demonstrating the algorithms in \autoref{sect_symb_prim_dec} and \autoref{sect_diff_algos} on some examples. 
The main commands in our implementation are: 
\begin{itemize}[--]
	\item \texttt{primaryDecomposition(Module)}: executes an implementation of the algorithm in \autoref{sect_symb_prim_dec}.
	\item \texttt{noetherianOperators}: executes an implementation of \autoref{algo_noeth_ops}.
	\item \texttt{getModuleFromNoetherianOperators}: executes an implementation of \autoref{algo_backwards}.
	\item \texttt{differentialPrimaryDecomposition}: executes an implementation of \autoref{algo_diff_prim_dec}.
\end{itemize}

\begin{remark}
The algorithms above have been implemented in \texttt{Macaulay2} \cite{MACAULAY2}.
\texttt{primaryDecomposition} is part of the default package \texttt{PrimaryDecomposition}, and the algorithm for modules is available to use as of version $1.17$.
The differential algorithms will appear in the package \texttt{NoetherianOperators} \cite{CCHKL} starting from version $1.18$.
For convenience, we have included these functions in a separate ancillary file ``\texttt{modulesNoetherianOperators.m2}''.
\end{remark}

We start with a simple example that first appeared in \cite[Example 4.4]{DIFF_PRIM_DEC}.

\begin{example}
	\label{examp_simple}
	Let  $R = \mathbb{Q}[x_1,x_2,x_3]$ and $U \subseteq R^2$ be the $R$-submodule
	$U = {\rm image}_R \! \begin{small} \begin{bmatrix} x_1^2 & x_1x_2 & x_1x_3 \\
			x_2^2 & x_2x_3 & x_3^2 \end{bmatrix}\end{small}$.
	We compute a primary decomposition and a minimal differential primary decomposition for $U$:
	\verbatimfont{\footnotesize}
	\begin{verbatim}
Macaulay2, version 1.17.2.1
i1 : load "modulesNoetherianOperators.m2";
i2 : printPD = M -> apply(primaryDecomposition M, Q -> trim image(gens Q | relations Q));
i3 : R = QQ[x_1,x_2,x_3];
i4 : U = image matrix {{x_1^2,x_1*x_2,x_1*x_3}, {x_2^2,x_2*x_3,x_3^2}};
i5 : M = R^2 / U;
i6 : L1 = printPD M
o6 = {image | 0 x_1 |, image | x_1 x_2^2 0            |, image | x_3 x_2^2 0     x_1x_2 x_1^2 |}
            | 1 0   |        | x_3 x_3^2 x_2^2-x_1x_3 |        | 0   0     x_3^2 x_2x_3 x_2^2 |
o7 : all(L1, isPrimary_M) and U == intersect L1
o7 = true
i8 : L2 = differentialPrimaryDecomposition U
                                   2
o8 = {{ideal x , {| 1 |}}, {ideal(x  - x x ), {| -x_3 |}}, {ideal (x , x ), {| 0    |}}}
              1   | 0 |            2    1 3    | x_1  |             3   2    | dx_3 |
o8 : List
i9 : U == intersect apply(L2, getModuleFromNoetherianOperators)
o9 = true
i10 : amult U
o10 = 3
	\end{verbatim}
	Notice that $\amult(U) = 3$ is the size of the computed differential primary decomposition.
\end{example}


\begin{example}
	Let  $R = \mathbb{Q}[x_1,x_2,x_3,x_4]$ and $U \subseteq R^2$ be the $R$-submodule
	$$
	U \;=\; {\rm image}_R \! 
	\begin{small} 
		\begin{bmatrix}  
			x_1x_2 & x_2x_3 & x_3x_4 & x_4x_1 \\
			x_1^2   & x_2^2   & x_3^2   & x_4^2
		\end{bmatrix}
	\end{small}.
	$$
\verbatimfont{\tiny}
\begin{verbatim}
i11 : R = QQ[x1,x2,x3,x4];
i12 : U = image matrix{{x1*x2,x2*x3,x3*x4,x4*x1}, {x1^2,x2^2,x3^2,x4^2}};
i13 : M = R^2 / U;
i14 : L1 = printPD M;
i15 : all(L1, isPrimary_M) and U == intersect L1
o15 = true
i16 : netList transpose{associatedPrimes M, L1}

      +-----------------------------------+---------------------------------------------------------------------------+
o16 = |ideal (x4, x2)                     |image | 0 x4 x2 |                                                          |
      |                                   |      | 1 0  0  |                                                          |
      +-----------------------------------+---------------------------------------------------------------------------+
      |ideal (x3, x1)                     |image | 0 x3 x1 |                                                          |
      |                                   |      | 1 0  0  |                                                          |
      +-----------------------------------+---------------------------------------------------------------------------+
      |ideal (x4, x3, x2)                 |image | x4 0  x2 0    x2x3 x3^6 |                                          |
      |                                   |      | 0  x4 x1 x3^2 x2^2 0    |                                          |
      +-----------------------------------+---------------------------------------------------------------------------+
      |ideal (x4, x3, x1)                 |image | x1 x3 0  0    x3x4 x4^6 |                                          |
      |                                   |      | 0  x2 x1 x4^2 x3^2 0    |                                          |
      +-----------------------------------+---------------------------------------------------------------------------+
      |ideal (x4, x2, x1)                 |image | x2 x4 0  x1x4 0    x1^6 |                                          |
      |                                   |      | 0  x3 x2 x4^2 x1^2 0    |                                          |
      +-----------------------------------+---------------------------------------------------------------------------+
      |ideal (x3, x2, x1)                 |image | x3 x1 0  0    0         x2^6 |                                     |
      |                                   |      | 0  x4 x3 x2^2 x1^2-x2x4 0    |                                     |
      +-----------------------------------+---------------------------------------------------------------------------+
      |ideal (x3 - x4, x2 - x4, x1 - x4)  |image | 1 x3-x4 x2-x4 x1-x4 |                                              |
      |                                   |      | 1 0     0     0     |                                              |
      +-----------------------------------+---------------------------------------------------------------------------+
      |ideal (x3 + x4, x2 - x4, x1 + x4)  |image | -1 x3+x4 x2-x4 x1+x4 |                                             |
      |                                   |      | 1  0     0     0     |                                             |
      +-----------------------------------+---------------------------------------------------------------------------+
      |                           2     2 |                                                                           |
      |ideal (x2 + x4, x1 + x3, x3  + x4 )|image | x2+x4 x3  x1 x4 0     -x4 |                                        |
      |                                   |      | 0     -x4 x4 x3 x2+x4 x1  |                                        |
      +-----------------------------------+---------------------------------------------------------------------------+
      |ideal (x4, x3, x2, x1)             |image | x1x4 x3x4 x2x3 x1x2 x4^4 x3^3x4 x3^4 x2^3x3 x2^4 x1^3x2 x1^4 0    ||
      |                                   |      | x4^2 x3^2 x2^2 x1^2 0    0      0    0      0    0      0    x4^4 ||
      +-----------------------------------+---------------------------------------------------------------------------+
i17 : L2 = differentialPrimaryDecomposition U;
i18 : netList L2
      +-----------------------------------+---------------------------------------------------------------------------------+
o18 = |ideal (x3, x1)                     |{| 1 |}                                                                          |
      |                                   | | 0 |                                                                           |
      +-----------------------------------+---------------------------------------------------------------------------------+
      |ideal (x4, x2)                     |{| 1 |}                                                                          |
      |                                   | | 0 |                                                                           |
      +-----------------------------------+---------------------------------------------------------------------------------+
      |ideal (x4, x3, x2)                 |{| -x1dx2 |, | -x1dx2^2 |, | -x1^2dx2^3-6x1dx2dx3 |, | -x1^2dx2^4-12x1dx2^2dx3 |}|
      |                                   | | 1      |  | 2dx2     |  | 3x1dx2^2+6dx3        |  | 4x1dx2^3+24dx2dx3       | |
      +-----------------------------------+---------------------------------------------------------------------------------+
      |ideal (x4, x3, x1)                 |{| -x2dx3 |, | -x2dx3^2 |, | -x2^2dx3^3-6x2dx3dx4 |, | -x2^2dx3^4-12x2dx3^2dx4 |}|
      |                                   | | 1      |  | 2dx3     |  | 3x2dx3^2+6dx4        |  | 4x2dx3^3+24dx3dx4       | |
      +-----------------------------------+---------------------------------------------------------------------------------+
      |ideal (x4, x2, x1)                 |{| -x3dx4 |, | -x3dx4^2 |, | -x3^2dx4^3-6x3dx1dx4 |, | -x3^2dx4^4-12x3dx1dx4^2 |}|
      |                                   | | 1      |  | 2dx4     |  | 3x3dx4^2+6dx1        |  | 4x3dx4^3+24dx1dx4       | |
      +-----------------------------------+---------------------------------------------------------------------------------+
      |ideal (x3, x2, x1)                 |{| -x4dx1 |, | -x4dx1^2 |, | -x4^2dx1^3-6x4dx1dx2 |, | -x4^2dx1^4-12x4dx1^2dx2 |}|
      |                                   | | 1      |  | 2dx1     |  | 3x4dx1^2+6dx2        |  | 4x4dx1^3+24dx1dx2       | |
      +-----------------------------------+---------------------------------------------------------------------------------+
      |ideal (x3 - x4, x2 - x4, x1 - x4)  |{| -1 |}                                                                         |
      |                                   | | 1  |                                                                          |
      +-----------------------------------+---------------------------------------------------------------------------------+
      |ideal (x3 + x4, x2 - x4, x1 + x4)  |{| 1 |}                                                                          |
      |                                   | | 1 |                                                                           |
      +-----------------------------------+---------------------------------------------------------------------------------+
      |                           2     2 |                                                                                 |
      |ideal (x2 + x4, x1 + x3, x3  + x4 )|{| -x3 |}                                                                        |
      |                                   | | x4  |                                                                         |
      +-----------------------------------+---------------------------------------------------------------------------------+
      |ideal (x4, x3, x2, x1)             |{| -2dx1dx2dx3^2-2dx1^2dx3dx4           |}                                       |
      |                                   | | 2dx1dx2^2dx3+dx1^2dx3^2+2dx1dx3dx4^2 |                                        |
      +-----------------------------------+---------------------------------------------------------------------------------+
i19 : U == intersect apply(L2, getModuleFromNoetherianOperators)
o19 = true
i20 : amult U
o20 = 22
\end{verbatim}
Again, $\amult(U) = 22$ is the size of the computed differential primary decomposition.
\end{example}

\section*{Acknowledgments}
We thank Rida Ait El Manssour, Marc H\"ark\"onen, Anton Leykin and Bernd Sturmfels for several joyful discussions on the topics appearing in this paper.
We are especially grateful to Marc H\"ark\"onen for his improvements to some of the \texttt{Macaulay2} implementations, and to Bernd Sturmfels for his constant guidance and support, throughout the duration of this project.
We thank the reviewer for their suggestions for the improvement of this work.

\begin{bibdiv}
\begin{biblist}

\bib{baranovsky2000moduli}{article}{
      author={Baranovsky, Vladimir},
      author={others},
       title={Moduli of sheaves on surfaces and action of the oscillator
  algebra},
        date={2000},
     journal={Journal of Differential Geometry},
      volume={55},
      number={2},
       pages={193\ndash 227},
}

\bib{Bri77}{article}{
      author={Briançon, Joel},
       title={Description de $\text{Hilb}^n[[x,y]]$},
    language={fre},
        date={1977},
     journal={Inventiones mathematicae},
      volume={41},
       pages={45\ndash 90},
         url={http://eudml.org/doc/142485},
}

\bib{Brodmann_Sharp_local_cohom}{book}{
      author={Brodmann, M.~P.},
      author={Sharp, R.~Y.},
       title={Local cohomology.},
     edition={Second},
      series={Cambridge Studies in Advanced Mathematics},
   publisher={Cambridge University Press, Cambridge},
        date={2013},
      volume={136},
        note={An algebraic introduction with geometric applications},
}

\bib{BRUMFIEL_DIFF_PRIM}{article}{
      author={Brumfiel, G.},
       title={Differential operators and primary ideals},
        date={1978},
     journal={J. Algebra},
      volume={51},
      number={2},
       pages={375\ndash 398},
}

\bib{BRUNS_HERZOG}{book}{
      author={Bruns, Winfried},
      author={Herzog, J\"urgen},
       title={Cohen-{M}acaulay rings},
     edition={2},
      series={Cambridge Studies in Advanced Mathematics},
   publisher={Cambridge University Press},
        date={1998},
}

\bib{CCHKL}{article}{
      author={Chen, Justin},
      author={Cid-Ruiz, Yairon},
      author={H\"ark\"onen, Marc},
      author={Krone, Robert},
      author={Leykin, Anton},
       title={Noetherian operators in {M}acaulay2},
        date={2021},
        note={arXiv:2101.01002},
}

\bib{CHKL}{article}{
      author={Chen, Justin},
      author={H\"ark\"onen, Marc},
      author={Krone, Robert},
      author={Leykin, Anton},
       title={Noetherian operators and primary decomposition},
        date={2020},
        note={arXiv:2006.13881},
}

\bib{PRIM_IDEALS_DIFF_EQS}{article}{
      author={Cid-Ruiz, Y.},
      author={Homs, Roser},
      author={Sturmfels, Bernd},
       title={Primary ideals and their differential equations},
        date={2021},
     journal={to appear in Foundations of Computational Mathematics},
        note={arXiv:2001.04700},
}

\bib{NOETH_OPS}{article}{
      author={Cid-Ruiz, Yairon},
       title={Noetherian operators, primary submodules and symbolic powers},
        date={2020},
     journal={Collectanea Mathematica},
       pages={1\ndash 28},
}

\bib{DIFF_PRIM_DEC}{article}{
      author={Cid-Ruiz, Yairon},
      author={Sturmfels, Bernd},
       title={Primary decomposition with differential operators},
        date={2021},
     journal={arXiv preprint arXiv:2101.03643},
}

\bib{DAMIANO}{article}{
      author={Damiano, Alberto},
      author={Sabadini, Irene},
      author={Struppa, Daniele~C},
       title={Computational methods for the construction of a class of
  noetherian operators},
        date={2007},
     journal={Experimental Mathematics},
      volume={16},
      number={1},
       pages={41\ndash 53},
}

\bib{Ehrenpreis}{book}{
      author={Ehrenpreis, Leon},
       title={Fourier analysis in several complex variables},
      series={Pure and Applied Mathematics, Vol. XVII},
   publisher={Wiley-Interscience Publishers, John Wiley \& Sons, New
  York-London-Sydney},
        date={1970},
}

\bib{EISEN_COMM}{book}{
      author={Eisenbud, David},
       title={Commutative algebra with a view towards algebraic geometry},
      series={Graduate Texts in Mathematics, 150},
   publisher={Springer-Verlag},
        date={1995},
}

\bib{EHV}{article}{
      author={Eisenbud, David},
      author={Huneke, Craig},
      author={Vasconcelos, Wolmer},
       title={Direct methods for primary decomposition},
        date={1992},
     journal={Invent. Math.},
      volume={110},
      number={2},
       pages={207\ndash 235},
}

\bib{ellingsrud1999irreducibility}{article}{
      author={Ellingsrud, Geir},
      author={Lehn, Manfred},
       title={Irreducibility of the punctual quotient scheme of a surface},
        date={1999},
     journal={Arkiv f{\"o}r Matematik},
      volume={37},
      number={2},
       pages={245\ndash 254},
}

\bib{MACAULAY2}{misc}{
      author={Grayson, Daniel~R.},
      author={Stillman, Michael~E.},
       title={Macaulay2, a software system for research in algebraic geometry},
        note={Available at \url{http://www.math.uiuc.edu/Macaulay2/}},
}

\bib{Groebner}{article}{
      author={Gr\"{o}bner, Wolfgang},
       title={On the {M}acaulay inverse system and its importance for the
  theory of linear differential equations with constant coefficients},
        date={2010},
        ISSN={1932-2232},
     journal={ACM Commun. Comput. Algebra},
      volume={44},
      number={1-2},
       pages={20\ndash 23},
        note={Translated from the 1937 German original [Abh. Math. Semin. Univ.
  Hamb. {{\bf{1}}2} (1937), 127--132] by Michael Abramson},
}

\bib{EGAIV_IV}{article}{
      author={{Grothendieck}, Alexander},
       title={{\'E}l\'ements de g\'eom\'etrie alg\'ebrique: {IV}. {\'e}tude
  locale des sch\'emas et des morphismes de sch\'emas, {Q}uatri\`eme partie},
    language={fr},
        date={1967},
     journal={Publications Math\'ematiques de l'IH\'ES},
      volume={32},
       pages={5\ndash 361},
}

\bib{HENNI_GUIMARES}{article}{
      author={Henni, Abdelmoubine~Amar},
      author={Guimar{\~a}es, Douglas~M},
       title={A note on the adhm description of quot schemes of points on
  affine spaces},
        date={2017},
     journal={arXiv preprint arXiv:1712.04369},
}

\bib{AFFINE_HOPF_I}{article}{
      author={Heyneman, Robert~G.},
      author={Sweedler, Moss~Eisenberg},
       title={Affine {H}opf algebras. {I}},
        date={1969},
     journal={J. Algebra},
      volume={13},
       pages={192\ndash 241},
}

\bib{IARROBINO_HILB}{book}{
      author={Iarrobino, Anthony~Ayers},
       title={Punctual hilbert schemes},
   publisher={American Mathematical Soc.},
        date={1977},
      volume={188},
}

\bib{Indrees}{article}{
      author={Idrees, Nazeran},
       title={Algorithms for primary decomposition of modules},
        date={2011},
     journal={Studia Sci. Math. Hungar.},
      volume={48},
      number={2},
       pages={227\ndash 246},
}

\bib{idrees2015algorithm}{article}{
      author={Idrees, Nazeran},
      author={Pfister, Gerhard},
      author={Sadiq, Afshan},
       title={An algorithm to compute a primary decomposition of modules in
  polynomial rings over the integers},
        date={2015},
     journal={Studia Scientiarum Mathematicarum Hungarica},
      volume={52},
      number={1},
       pages={40\ndash 51},
}

\bib{idrees2014primary}{article}{
      author={Idrees, Nazeran},
      author={Sadiq, Afshan},
      author={Tassaddiq, Asifa},
       title={On primary decomposition of modules},
        date={2014},
     journal={arXiv preprint arXiv:1408.5515},
}

\bib{lasker1905theorie}{article}{
      author={Lasker, Emanuel},
       title={Zur theorie der moduln und ideale},
        date={1905},
     journal={Mathematische Annalen},
      volume={60},
      number={1},
       pages={20\ndash 116},
}

\bib{MATSUMURA}{book}{
      author={Matsumura, Hideyuki},
       title={Commutative ring theory},
     edition={1},
      series={Cambridge Studies in Advanced Mathematics volume 8},
   publisher={Cambridge University Press},
        date={1989},
}

\bib{Mateusz}{book}{
      author={Micha{\l}ek, Mateusz},
      author={Sturmfels, Bernd},
       title={Invitation to nonlinear algebra},
      series={Graduate Studies in Mathematics},
   publisher={American Mathematical Society, Providence, RI},
        date={2021},
      volume={211},
}

\bib{noether1921idealtheorie}{article}{
      author={Noether, Emmy},
       title={Idealtheorie in ringbereichen},
        date={1921},
     journal={Mathematische Annalen},
      volume={83},
      number={1},
       pages={24\ndash 66},
}

\bib{OBERST_NOETH_OPS}{article}{
      author={Oberst, Ulrich},
       title={The construction of {N}oetherian operators},
        date={1999},
     journal={J. Algebra},
      volume={222},
      number={2},
       pages={595\ndash 620},
}

\bib{PALAMODOV}{book}{
      author={Palamodov, V.~P.},
       title={Linear differential operators with constant coefficients},
      series={Grundlehren der mathematischen Wissenschaften, Band 168},
   publisher={Springer-Verlag, New York-Berlin},
        date={1970},
}

\bib{rutman1992grobner}{article}{
      author={Rutman, Elizabeth~W},
       title={Gr{\"o}bner bases and primary decomposition of modules},
        date={1992},
     journal={Journal of symbolic computation},
      volume={14},
      number={5},
       pages={483\ndash 503},
}

\bib{STV_DEGREE}{article}{
      author={Sturmfels, Bernd},
      author={Trung, Ng\^{o}~Vi\^{e}t},
      author={Vogel, Wolfgang},
       title={Bounds on degrees of projective schemes},
        date={1995},
     journal={Math. Ann.},
      volume={302},
      number={3},
       pages={417\ndash 432},
}

\bib{SULLIVANT_SYMB}{article}{
      author={Sullivant, Seth},
       title={Combinatorial symbolic powers},
        date={2008},
        ISSN={0021-8693},
     journal={Journal of Algebra},
      volume={319},
      number={1},
       pages={115\ndash 142},
  url={https://www.sciencedirect.com/science/article/pii/S0021869307005273},
}

\end{biblist}
\end{bibdiv}

\end{document}